\newtheorem{theorem}{Theorem}[section]
\newtheorem{corollary}[theorem]{Corollary}
\newtheorem{lemma}[theorem]{Lemma}
\theoremstyle{remark}
\newtheorem{remark}[theorem]{Remark}
\definecolor{deprecated}{rgb}{0.5,0.5,0.5}
\newcommand{\cus}{S_{c}}
\newcommand{\tcus}{T_{c}}
\newcommand{\tcusn}{T_{c}^{n}}
\newcommand{\cT}{T}
\newcommand{\cJ}{\mathcal{J}}
\newcommand{\cF}{\mathcal{F}}
\newcommand{\cFt}{\mathcal{F}_{t}}
\newcommand{\cS}{\mathcal{S}}
\newcommand{\Exp}{{\sf E}}
\newcommand{\Pro}{{\sf P}}
\newcommand{\ind}[1]{\mathbbm{1}_{\{#1\}}}   %indicator
\begin{document}
\title[Optimal sequential change-detection]{Optimal sequential change-detection for fractional diffusion-type processes}
\author{Alexandra Chronopoulou}
\address{Alexandra Chronopoulou \\  Department of Statistics and Applied Probability\\ University of California, Santa Barbara \\ CA
93106, USA}

\email{chronopoulou@pstat.ucsb.edu}
\author{Georgios Fellouris}
\address{Georgios Fellouris \\  Department of Mathematics\\ University of Southern California\\ 
%3620 South Vermont Ave., KAP 416, 
Los Angeles, CA 90089, USA}
\email{ fellouri@usc.edu}

\begin{abstract}
We consider the problem of  detecting an abrupt change in the distribution of a sequentially observed stochastic process. 
We establish the optimality of the CUSUM  test with respect to a modified version of Lorden's criterion for arbitrary processes with continuous paths
%, where detection delay is measured in terms of the quadratic variation of the log-likelihood ratio between the post and pre--change distribution that is accumulated between the time of the change and the time an alarm is raised. 
and apply this general result to the special case of fractional diffusion--type processes.
% and obtain an explicit sufficient condition for the optimality of CUSUM in this context. 
As a by-product, we show that the CUSUM test optimizes Lorden's \textit{original} criterion  when a fractional Brownian motion with Hurst index $H$ 
adopts a polynomial drift term with exponent  $H+1/2$ after the change. 
%which is known to hold when a standard Brownian motion adopts a linear drift after the change, to 
%In this way, we generalize the well-known Lorden-optimality of the CUSUM test in the case that a standard Brownian motion adopts a linear drift.  
% whose exponent depends on $H$. % a novel family of model-free, Lorden-like criteria is introduced and it is shown that these criteria are  optimized by the \textsc{cusum} test when a fractional Brownian motion adopts a polynomial drift. Finally, %Finally, a modification of the continuous-time \textsc{cusum} test is proposed for the case that only discrete-time observations  are available. 
\end{abstract}

\keywords{ CUSUM; Sequential Change Detection;  Fractional Brownian Motion; Fractional Ornstein--Uhlenbeck; Diffusion-Type Processes; Optimality; Change Point Detection} % insert keywords separated by a semicolon

\subjclass[2000]{Primary 60G35, 60G22; Secondary 60L10, 60G40}

\maketitle

\section{Introduction}
The quick  detection of an abrupt change in the behavior of a stochastic system is an important problem in many application areas, such as quality control,  target tracking, navigation,  seismology, bio--surveillance, computer security. %(see for example \cite{mac}, \cite{nvk}, \cite{tarkim}). 
More specifically, the problem is to find a detection rule that raises an alarm, as soon as possible after the change has occurred, based on sequential observations of the system. Thus, a good detection rule should have small detection delay, but also a low frequency of false alarms in its repeated applications. 

There are three main formulations for the change--detection problem that  balance the trade--off 
between these two antithetic goals. A Bayesian approach, developed by Shiryaev \cite{shiry}, where the change--point is modeled as a random variable, and two minimax approaches, due to Pollak \cite{pol} and Lorden \cite{lord}, where the change--point is considered to be an unknown, deterministic parameter. For a comparison of these formulations we refer to the paper of Moustakides \cite{moustann} and for exhaustive treatments of sequential change--detection 
to the books of Basseville and Nikiforov \cite{niki} and Hadjiliadis and Poor \cite{oly}.  

In the present work, we focus on  Lorden's approach, which has a deep connection with the so-called Cumulative Sums (CUSUM) test, a detection structure that was proposed by Page \cite{page} and has been very popular in applications since then (see for example \cite{olw}). Lorden \cite{lord} quantified the performance of a detection rule with its worst (with respect to the time of the change) conditional expected detection delay given the worst possible scenario until the time of the change and suggested the minimization of  this criterion subject to an upper bound on the rate of 
false alarms.  In the case of independent and identically distributed observations before and after the change, Lorden showed that this criterion is attained asymptotically by the CUSUM test. Moustakides \cite{moust} --and later Ritov \cite{rito}-- proved that the CUSUM test is an \textit{exact} solution to Lorden's optimization problem. This exact optimality property was extended in continuous--time in the case of a standard Brownian motion that adopts a linear drift
(Shiryaev \cite{shircus}, Beibel \cite{beib}), as well as in the case of a diffusion--type process  with constant ``signal to  noise ratio'' (Moustakides \cite{moustito}). The latter was a by--product of the main result obtained in the same paper, according to which  the CUSUM procedure optimizes a modified version of Lorden's criterion in the much more general framework of diffusion--type processes that satisfy a ``full--energy'' condition.

In the present work, we extend the optimality of CUSUM (with respect to the criterion introduced in \cite{moustito}) to arbitrary \textit{continuous--path} processes and apply this general result to the case of \textit{fractional} processes of diffusion--type.
This is a class of continuous--path processes that do not admit a semimartingale decomposition, thus they are clearly not in the scope of existing optimality results. We show in particular that the CUSUM test is optimal (in the above sense) for detecting a change from a fractional Brownian motion (fBm) to a fractional Ornstein--Uhlenbeck process (fOU), as well as for detecting the emergence of a  linear drift in a fBm for \textit{any} value of the Hurst index. 

Finally, we establish the  optimality of the CUSUM test with respect to Lorden's \textit{original} criterion when the observed process is a 
fBm with Hurst index $H$ that adopts a polynomial drift term with exponent $H+1/2$ after the change. This is one of the rare instances for which the exact solution to this problem is known. Of course, in the special case $H=1/2$, we recover the  optimality of CUSUM for the detection 
of a linear drift in a  standard Brownian motion.

Apart from a theoretical point of view, these extensions are also interesting due the increasing presence of fBm in a wide range of applications. 
This  can be explained by the fact that fBm is well--suited to model phenomena that are characterized by self--similarity and/or long--memory. 
As a result, it has been used  as the basic building block for models in a variety of fields, such as hydrology, traffic networks, finance  and economics (see for example \cite{beran}, \cite{CV},  \cite{CR}, \cite{Hurst}). 

These diverse applications have also triggered a great interest in the statistical inference for  processes related to fBm. 
Indeed, Kleptsyna and Le Breton  in  \cite{KLB1} studied the maximum likelihood estimator  (MLE) for the parameter 
in the drift of the fractional Ornstein--Uhlenbeck  process with Hurst index $H>1/2$. 
Tudor and Viens \cite{TV} used Malliavin calculus techniques to study the MLE for any $H \in (0,1)$ and a more general class of fractional diffusions, 
in which the drift coefficient is linear with respect to the unknown parameter. For the same class of processes,  Rao \cite{raoestim} studied a sequential version of the MLE. The problem of sequential testing for fractional diffusion--type processes was considered by Rao in \cite{rao}. 

%All these results were based on an integral transformation of the observed process, which was introduced by Norros et al. \cite{NVV} 
%in order to compute the MLE for the parameter in the linear drift of a fBm. This is the technique we also use in the present paper in order to establish the %optimality property of the CUSUM test in the context of fractional diffusion--type processes. 

In what follows, we establish the optimality of the CUSUM test for arbitrary  continuous--path stochastic processes in Section 2,
we focus on fractional diffusion--type processes in  Section 3 and we conclude in Section 4.

% and as a by-product we extend the optimality  of CUSUM with respect to Lorden's original criterion. 
%we consider the sequential change detection problem for fractional stochastic differential equations. In Section 5, we propose a novel class of Lorden-like, model-free criteria and  show that they are optimized by  the \textsc{cusum} test under certain dynamics. In Section 6, we discuss the implementation of the \textsc{cusum} test in the case of discrete-time observations and we conclude in Section 7. 

%-----------------------------------------------------------
%-----------------------------------------------------------
%-----------------------------------------------------------

\section{CUSUM optimality for continuous-path processes}
\subsection{Problem formulation}
Let $(\Omega, \cF)$ be the canonical space of continuous functions on $[0, \infty)$ that vanish at 0. %, with the associated Borel $\sigma$--algebra. 
We denote by $\{\xi_{t}\}$ the coordinate process on this space and  by $\{\cFt\}$ its natural filtration, thus 
$\xi_{t}(\omega):=\omega(t)$ for every $\omega \in \Omega$ and $\cFt:=\sigma(\xi_{s}: 0 \leq s \leq t)$ for every $t>0$, whereas 
$\xi_{0}:=0$ and $\cF_{0}:=\{\emptyset, \Omega\}$. Let $\Pro_{0}, \Pro_{\infty}$ be two completely specified probability measures on $(\Omega, \cF)$. We assume that $\Pro_{0}$ and $\Pro_{\infty}$ are locally equivalent, i.e. mutually absolutely continuous when they are restricted to the $\sigma$--algebra $\cFt$ for any  $0 < t< \infty$,  and we denote by $u_{t}$ the corresponding log--likelihood ratio 
%given the observed path up to time $t$, 
\begin{equation*} %\label{llr}
u_{t} := \log \frac{\text{d}\Pro_{0}}{\text{d}\Pro_{\infty}} \Big|_{\cFt}, \quad t>0 \, ; \quad u_{0}:=1.
\end{equation*}

We assume that the distribution of $\{\xi_{t}\}$, which we denote by $\Pro_{\tau}$, changes at some unknown, deterministic time $\tau \in [0,\infty]$ from $\Pro_{0}$ to $\Pro_{\infty}$. Thus, $\Pro_{\tau}$ coincides with $\Pro_{\infty}$ on $\cFt$ for any $t \in [0, \tau]$ and with $\Pro_{0}$ on $\cFt$ for any $t \in (\tau, \infty)$. Then, it is clear that  $\Pro_{\tau}$ is mutually absolutely continuous with $\Pro_{\infty}$ on  $\cFt$  for any  $t > \tau$ and that
\begin{equation*} 
u_{t}-u_{\tau}= \log \frac{\text{d}\Pro_{\tau}}{\text{d}\Pro_{\infty}} \Big|_{\cFt}, \quad t > \tau.
\end{equation*}
%Assuming that the distributions $\Pro_{0}, \Pro_{\infty}$ are known, but the time of the change $\tau$ completely unknown, 
The goal is to find a  sequential detection rule, that is an $\{\cFt\}$--stopping time $T$, that minimizes the detection delay for any change point $\tau  \in [0,\infty)$.
%so that $(T-\tau)^{+}$ takes small values under $\Pro_{\tau}$, while $T$ takes large values under $\Pro_{\infty}$. 
Since it is not possible to do so for every $\tau$, we follow a minimax approach and consider the following constrained optimization problem:
\begin{align} \label{moust_crit} 
\begin{split}
&\inf_{T}  \cJ_{M}[\cT] \quad \text{when} \quad  \frac{1}{2} \, \Exp_{\infty} [\langle u \rangle_{T}] \geq \gamma, \; \text{where} \\
\cJ_{M}[\cT] &:= \sup_{ \tau \geq 0} \; \text{esssup}  \; \frac{1}{2} \,  \Exp_{\tau} [\bigl(\langle u \rangle_{\cT}-\langle u \rangle_{\tau} \bigr)^{+} \, | \; {\mathcal{F}}_{\tau}],
\end{split}
\end{align}
where $\langle u \rangle_{t}$ is the quadratic variation of $u_{t}$. When the observed process is of diffusion-type before and after the change, this problem coincides with the criterion introduced by Moustakides in \cite{moustito} (and this explains the inclusion of the  redundant $1/2$ in its definition).  When $\langle u \rangle_{t}$ is proportional to $t$, it is equivalent to  Lorden's  criterion, 
\begin{align}   \label{lord_crit}
\begin{split}
&\inf_{\cT} \cJ_{L}[\cT] \;\text{when} \quad \Exp_{\infty} [\cT] \geq \gamma, \quad \text{where} \\  
\cJ_{L}[\cT] &:= \sup_{ \tau \geq 0} \; \text{esssup}  \; \Exp_{\tau} [(\cT-\tau)^{+} | \; {\mathcal{F}}_{\tau}],
\end{split}
\end{align}
In both formulations, the goal is to find a detection rule that minimizes the worst (with respect to $\tau$) conditional expected \textit{detection delay}  given the worst possible history of observations up to the time of the change, subject to a lower bound on the \textit{period of false alarms}. Their difference is in the way ``detection delay'' and ``period of false alarms'' are measured. This is done in terms of the actual time in (\ref{lord_crit}) and 
in  terms of the accumulated quadratic variation of the log-likelihood ratio process in (\ref{moust_crit}). 

The latter approach also has an appealing interpretation in terms of Kullback--Leibler information, which becomes clear with (\ref{klfap}) and (\ref{kldel}) and  justifies calling (as in \cite{moustito}) $\cJ_{M}[T]$ the \textit{K-L detection divergence} and $\frac{1}{2} \Exp_{\infty}[\langle u \rangle_{T}]$ the \textit{K-L false alarm divergence} of the detection rule $T$. However, the main advantage of this formulation is that its solution is known for a much larger class of dynamics, as we discuss below. For simplicity, in what follows we will say that a detection rule is $\cJ_{M}$\textit{--optimal} if it solves the problem defined in (\ref{moust_crit}) and $\cJ_{L}$\textit{--optimal} if it solves the problem defined in (\ref{lord_crit}).

\subsection{Main result}
Let us first define the CUSUM stopping time
%This is for example the case when $\xi_{t} =W_{t} + (t-\tau)^{+}$, that is when the observed process is a standard Brownian motion that adopts a linear drift, in which case $u_{t}= \xi_{t}- t/2$ and $\langle u \rangle_{t}=t/2$.  \subsection{The CUSUM test}
%, which was originally proposed by Page \cite{page} for the sequential detection of changes in discrete--time processes, can be defined in continuous--time in the following way:
\begin{equation*} %\label{cusum}
\cus := \inf \{t \geq 0: y_{t} \geq c\}, \quad y_{t}:= u_{t}- \inf_{0 \leq s \leq t} u_{s}, \quad t \geq 0,
\end{equation*}
where the threshold $c$ is assumed to satisfy the corresponding false alarm constraint with equality, that is 
$\Exp_{\infty}[\langle u \rangle_{\cus}]=\gamma$ for the problem defined in (\ref{moust_crit}) and $\Exp_{\infty}[\cus]=\gamma$ for the problem defined in (\ref{lord_crit}).

As we discussed in the Introduction, it is known that this detection rule is $\cJ_{M}$--optimal for diffusion-type processes that satisfy a  ``full--energy'' condition and additionally $\cJ_{L}$--optimal for diffusion-type processes that have a constant ``signal to noise ratio''.  With the following theorem we extend these optimality properties.

\begin{theorem}\label{MAIN} 
The CUSUM test is $\cJ_{M}$-optimal, if the following condition is satisfied
\begin{align}
& \lim_{t \rightarrow   \infty} \langle u \rangle_{t} =\infty  \quad \Pro_{0}, \Pro_{\infty}-\text{a.s.}  \label{full}
\end{align}
When in particular  $\langle u \rangle_{t}$ is proportional to $t$, the CUSUM test is also $\cJ_{L}$--optimal. 
%solves Problem B.
%is $\cJ_{M}$--optimal, i.e. for any $\gamma>0$ it is
%\begin{equation}   \label{lord_crit}
%\cJ_{M}[\cus] = \inf_{\cT \in \cC_{M}(\gamma)}  \cJ_{M}[\cT] =e^{c}-c-1,
%\end{equation} 
%where $c$ is the unique root of the equation $\gamma= e^{c}-c-1$.
%In particular, if $\gamma= e^{c}-c+1$, then $\cus$ solves  (\ref{moust_crit}). 
\end{theorem}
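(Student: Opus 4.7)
The plan is to reduce the general continuous-path setting, via a Dambis--Dubins--Schwarz time change driven by $\langle u\rangle$, to the classical problem of detecting the emergence of a unit drift in a standard Brownian motion, for which the $\cJ_{M}$-optimality of CUSUM is already known (Shiryaev \cite{shircus}, Beibel \cite{beib}, Moustakides \cite{moustito}).

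First, under \eqref{full}, the time change $\sigma_{s} := \inf\{t \geq 0 : \langle u \rangle_{t} > s\}$ is finite for every $s$ and exhausts $[0,\infty)$, so the subordinated filtration $\tilde{\cF}_{s} := \cF_{\sigma_{s}}$ is well-defined on all of $[0,\infty)$. Since $e^{u_{t}}$ is a positive $\Pro_{\infty}$-martingale, It\^o's formula gives $u_{t} = N_{t} - \langle u\rangle_{t}/2$ with $N$ a continuous $\Pro_{\infty}$-local martingale satisfying $\langle N\rangle = \langle u\rangle$, and Dambis--Dubins--Schwarz then yields $\tilde{u}_{s} := u_{\sigma_{s}} = W_{s} - s/2$ under $\Pro_{\infty}$ for some standard $\{\tilde{\cF}_{s}\}$-Brownian motion $W$. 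An analogous Girsanov-type computation under $\Pro_{0}$ yields $\tilde{u}_{s} = W'_{s} + s/2$ for some standard $\{\tilde{\cF}_{s}\}$-Brownian motion $W'$. Thus in the new clock one is exactly in the Brownian-motion change-detection setting, with the change occurring at the instant $\tilde{\tau} := \langle u\rangle_{\tau}$.

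Next I would transport the CUSUM rule, the false-alarm constraint, and the delay criterion across the time change. Since $y_{t}$ depends on $u$ only through its running infimum, $y_{\sigma_{s}} = \tilde{u}_{s} - \inf_{r \leq s} \tilde{u}_{r}$ and $\cus = \sigma_{\tilde{S}_{c}}$, where $\tilde{S}_{c} := \inf\{s \geq 0 : y_{\sigma_{s}} \geq c\}$ is the CUSUM rule for the time-changed observation model; in particular $\langle u\rangle_{\cus} = \tilde{S}_{c}$. More generally, every $\{\cFt\}$-stopping time $T$ corresponds to the $\{\tilde{\cF}_{s}\}$-stopping time $\tilde{T} := \langle u\rangle_{T}$, with $\Exp_{\infty}[\langle u\rangle_{T}] = \Exp_{\infty}[\tilde{T}]$ and $(\langle u\rangle_{T} - \langle u\rangle_{\tau})^{+} = (\tilde{T} - \tilde{\tau})^{+}$, so the false-alarm constraint and the delay integrand in \eqref{moust_crit} transport directly. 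Taking the supremum over $\tau \geq 0$ and the essential supremum over $\cF_{\tau}$ then produces, in the time-changed model, precisely the worst-case criterion optimized by CUSUM in the Brownian-motion setting, which gives the $\cJ_{M}$-optimality of $\cus$. When $\langle u\rangle_{t}$ is proportional to $t$, the $\cJ_{M}$-criterion and its false-alarm constraint are constant multiples of the $\cJ_{L}$-criterion and of $\Exp_{\infty}[T]$, so $\cJ_{L}$-optimality is automatic.

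The step I expect to be the most delicate is the matching of the worst-case expressions across the time change: one must verify that the supremum over $\tau \geq 0$ combined with the essential supremum over $\cF_{\tau}$ in \eqref{moust_crit} translates into the supremum that underlies the Brownian-motion optimality statement, i.e.\ that every $\{\tilde{\cF}_{s}\}$-stopping time arises as $\langle u\rangle_{T}$ for some $\{\cFt\}$-stopping time $T$ and that the random change point $\tilde{\tau} = \langle u\rangle_{\tau}$ is correctly absorbed by the essential supremum on the original side. Once this correspondence is set up cleanly, the rest of the argument is bookkeeping.
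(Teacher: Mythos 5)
Your reduction via the Dambis--Dubins--Schwarz time change is a genuinely different route from the paper's, and its first half is sound: the paper's own Lemma \ref{L0} establishes exactly the decompositions $u_t = \tilde{X}_t - \langle u\rangle_t/2 = -X_t + \langle u\rangle_t/2$ that make $u_{\sigma_s}$ a Brownian motion with drift $\mp s/2$ in the intrinsic clock, and the false-alarm constraint and the delay integrand do transport verbatim because both are expressed through $\langle u\rangle$. The gap is precisely at the step you flag as ``delicate'' and then defer: the two worst-case criteria are suprema over \emph{different} families of change scenarios, and neither dominates the other a priori. A deterministic change point $\tau$ in real time becomes the \emph{random} time $\tilde{\tau}=\langle u\rangle_\tau$ in the new clock, while the Brownian-motion optimality result you invoke is a supremum over \emph{deterministic} change points $s$ in the new clock, i.e.\ over the random times $\sigma_s$ in real time --- and the measures describing a change at the stopping time $\sigma_s$ are not even defined in the problem formulation. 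Consequently the known lower bound for the time-changed problem does not yield $\cJ_{M}[\cT]\geq g(c)$ without an additional argument identifying (or at least comparing) the two worst cases; this is not bookkeeping, it is the heart of the lower bound. A secondary issue is that you would need the Brownian-motion result for the enlarged filtration $\cF_{\sigma_s}$, which can strictly contain the natural filtration of $W$ (for instance, it knows the elapsed real time $\sigma_s$), so the class of competing stopping rules is larger than in the cited results.

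The paper avoids all of this by never leaving the original clock: after the martingale decomposition of Lemma \ref{L0} it applies It\^{o}'s formula to $g(y_t)=e^{-y_t}+y_t-1$ and $h(y_t)=e^{y_t}-y_t-1$ to obtain the closed-form identities of Lemma \ref{L1}, namely $\Exp_{\tau}[\langle u\rangle_{\tcus}-\langle u\rangle_{\tau}\,|\,\cF_{\tau}]=\Exp_{\tau}[g(y_{\tcus})-g(y_{\tau})\,|\,\cF_{\tau}]$ and its $\Pro_{\infty}$-analogue, whence $\cJ_{M}[\cus]=g(c)$, and then reproduces Moustakides's lower bound $\cJ_{M}[\cT]\geq \Exp_{\infty}[e^{y_{\tcus}}g(y_{\tcus})]/\Exp_{\infty}[e^{y_{\tcus}}]$ by averaging over deterministic $\tau$ in real time. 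To salvage your route you would have to prove that the supremum over deterministic $\tau$ combined with the essential supremum over $\cF_{\tau}$ dominates the corresponding worst case over deterministic times in the intrinsic clock --- which in effect forces you to rerun that averaging argument in real time, i.e.\ to reproduce the paper's proof.
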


%\begin{align}
%& y_{t}:= u_{t}- \inf_{0 \leq s \leq t} u_{s}, \quad t \geq 0, \\
%& g(x):=e^{-x}+x-1, \quad x \geq 0 \\
%& \tcus:=\cT \wedge \cus = \min(\cT,\cus) ,\\
%& \tcCg=\{ T \in \cC: \Exp_{\infty}[ \langle u \rangle_{T}] = \gamma\} .
%& \cJ'_{M}[\cT] := \frac{\Exp_{\infty}[e^{y_{\tcus}} g(y_{\tcus})]}{\Exp_{\infty}[e^{y_{\tcus}}]} .
%\end{align}

%Suppose that the observed process $\xi$ has continuous paths and that condition (\ref{A2}) is satisfied,  %The \textsc{cusum} test $\cus$ solves problem (\ref{moust_crit}), among  stopping times $\cT$ such that  $\Exp_{0}[A_{\cT}] , \Exp_{\infty}[A_{\cT}] < \infty$, %as long as the threshold $c$ is chosen so that $\gamma= e^{c}-c-1$. The CUSUM test is \begin{enumerate} 
%for any $\tau \in [0,\infty]$ and $c>0$,  $\Pro_{\tau}(\cus<\infty)=1$, i.e. the CUSUM test terminates almost surely. 
%\subsection{Proof of Theorem \ref{MAIN}}

In order to prove this theorem, we start with the following lemma, which reveals the structure of the log-likelihood ratio process $\{u_{t}\}$.

\begin{lemma} \label{L0}
There exist continuous processes  $\{\tilde{X}_{t}\}$ and $\{X_{t}\}$, which are local martingales (vanishing at 0) with respect to $\Pro_{\infty}$ and $\Pro_{0}$ respectively and have the same quadratic variation, so that
\begin{align*} %\label{lr}
u_{t} &= \tilde{X}_{t} -\frac{1}{2} \,  \langle u \rangle_{t} = -X_{t}+\frac{1}{2} \,  \langle u \rangle_{t}, \quad t \geq 0.
\end{align*}
%where $\tilde{X}_{t} = -X_{t}+\langle X \rangle_{t}$ and $\langle u \rangle_{t}=\langle \tilde{X} \rangle_{t} =   \langle X \rangle_{t}$. 
% Moreover,  $\tilde{X}_{t}= \frac{1}{2} \,  \langle X \rangle_{t}
%If additionally $\Pro_{0}$ and $\Pro_{\infty}$ are singular measures, then 
%\begin{align} 
%& \Pro_{0} \Bigl( \lim_{t \rightarrow   \infty} \langle X \rangle_{t} =\infty \Bigr)=1,  \label{full1}\\
%& \Pro_{\infty} \Bigl( \lim_{t \rightarrow   \infty} \langle X \rangle_{t} =\infty \Bigr)=1.  \label{full2}
%\end{align}
\end{lemma}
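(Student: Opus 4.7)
The plan is to exploit the fact that, up to a martingale exponential, $u_t$ is literally the logarithm of the Radon--Nikodym density process, and then to invoke Girsanov to flip the reference measure.

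First, set $M_t := \exp(u_t)$, which by hypothesis is (a version of) the density $d\Pro_{0}/d\Pro_{\infty}\big|_{\cFt}$ and is therefore a strictly positive $\{\cFt\}$--martingale under $\Pro_{\infty}$ with $M_{0}=1$. Since the underlying coordinate process $\{\xi_t\}$ has continuous paths on the canonical space $(\Omega,\cF)$, one obtains a continuous modification of $\{M_t\}$; equivalently, $\{u_t\}$ may be taken to have continuous paths, so that its quadratic variation $\langle u \rangle_t$ is well defined as a continuous, non-decreasing process. Applying It\^o's formula to $\log M_t$ and setting
\begin{equation*}
\tilde{X}_t := \int_{0}^{t} \frac{1}{M_s}\, dM_s, \qquad t\ge 0,
\end{equation*}
yields that $\tilde{X}$ is a continuous $\Pro_{\infty}$--local martingale starting at $0$, and that
\begin{equation*}
u_t \;=\; \log M_t \;=\; \tilde{X}_t - \tfrac{1}{2}\,\langle \tilde{X}\rangle_t .
\end{equation*}
A direct computation of quadratic variations gives $\langle \tilde{X}\rangle_t = \int_0^t M_s^{-2}\, d\langle M\rangle_s = \langle u\rangle_t$, which already establishes the first equality and identifies $\langle \tilde{X}\rangle$ with $\langle u\rangle$.

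Next, I would obtain the decomposition under $\Pro_{0}$ via Girsanov. Because $\Pro_{0}$ and $\Pro_{\infty}$ are locally equivalent with density $M = \mathcal{E}(\tilde{X})$, the classical Girsanov theorem for continuous local martingales asserts that $\tilde{X}_t - \langle \tilde{X}\rangle_t$ is a continuous $\Pro_{0}$--local martingale. Setting
\begin{equation*}
X_t := -\tilde{X}_t + \langle u\rangle_t, \qquad t\ge 0,
\end{equation*}
therefore produces a continuous $\Pro_{0}$--local martingale vanishing at $0$. Substituting back, $-X_t + \tfrac{1}{2}\langle u\rangle_t = \tilde{X}_t - \tfrac{1}{2}\langle u\rangle_t = u_t$, which is the second equality. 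Finally, since $\langle u \rangle$ is continuous of bounded variation, $\langle X\rangle = \langle -\tilde{X} + \langle u\rangle\rangle = \langle \tilde{X}\rangle = \langle u\rangle$, so $\tilde{X}$ and $X$ share their quadratic variation, as required.

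The only genuine subtlety is the continuity claim for $\tilde X$ and $X$, which reduces to the continuity of the density process $\{M_t\}$. In the canonical continuous-path setting adopted here this is standard (any c\`adl\`ag martingale on the natural filtration of a continuous process admits a continuous modification whenever the two underlying laws are locally equivalent), but it is the step that needs to be justified carefully before the It\^o/Girsanov machinery may be applied; once it is in place, the rest of the argument is essentially bookkeeping.
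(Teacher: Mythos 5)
Your argument is correct and, for the first identity, coincides with the paper's: both represent the density $e^{u_t}$ as the stochastic exponential of a continuous $\Pro_{\infty}$--local martingale $\tilde X$ (you do this by hand, via It\^o's formula applied to $\log M_t$; the paper instead cites Revuz--Yor, Proposition (1.6), p.~328, which encapsulates exactly this computation). Where you diverge is in producing $X$: the paper applies the same exponential representation a \emph{second} time, now to the $\Pro_{0}$--martingale $e^{-u_t}$, and reads off $u_t=-X_t+\tfrac12\langle X\rangle_t$ by symmetry, whereas you keep the single process $\tilde X$ and invoke Girsanov to conclude that $X:=-(\tilde X-\langle\tilde X\rangle)$ is a continuous $\Pro_{0}$--local martingale. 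Both routes are valid. The paper's is marginally more economical (one representation lemma used twice, no Girsanov), while yours has the small benefit of exhibiting the explicit relation $X=-\tilde X+\langle u\rangle$ between the two local martingales and of making $\langle X\rangle=\langle\tilde X\rangle$ immediate, rather than a consequence of matching the two logarithmic expansions.

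One caveat: the justification you offer for the continuity of the density process --- that a c\`adl\`ag martingale on the natural filtration of a continuous process automatically admits a continuous modification under local equivalence --- is not a correct general fact; the natural filtration of a continuous process can support discontinuous martingales (e.g.\ that of $t\mapsto\int_0^t N_s\,ds$ for a Poisson process $N$, in which the compensated Poisson process is a discontinuous martingale). The paper does not prove this step either: it simply asserts that $e^{u_t}$ and $e^{-u_t}$ are \emph{continuous} martingales, which is in effect a standing assumption of the whole framework (without it the quadratic variation $\langle u\rangle_t$ appearing in the optimality criterion would not even be defined). So you are right to flag the issue, but the repair you sketch should be dropped or replaced by an explicit hypothesis of continuity.
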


\begin{proof}
Since the likelihood ratios $\{e^{u_{t}}\}$ and $\{e^{-u_{t}}\}$ are continuous martingales with respect to $\Pro_{\infty}$ and $\Pro_{0}$ respectively, 
it is well--known (see for example, Proposition (1.6), pg. 328 in \cite{yor}) that there exist two \textit{unique}, continuous, local martingales $\{\tilde{X}_{t}\}$ and $\{X_{t}\}$  with respect to $\Pro_{\infty}$ and $\Pro_{0}$ respectively,  so that 
\begin{equation*}
e^{u_{t}} = \exp \{ \tilde{X}_{t}- (1/2) \, \langle \tilde{X} \rangle_{t}\}, \quad  e^{-u_{t}} = \exp \{ X_{t} - (1/2) \, \langle X \rangle_{t}\},
\end{equation*}
where  $X_{0}=\tilde{X}_{0}=0$, since $u_{0}=1$. Then, taking logarithms  we obtain 
$$u_{t} = \tilde{X}_{t} -\frac{1}{2} \,  \langle \tilde{X} \rangle_{t} = -X_{t}+\frac{1}{2} \,  \langle X \rangle_{t}$$
and consequently %$\tilde{X}_{t} = -X_{t}+\langle X \rangle_{t}$  and 
$\langle u \rangle_{t}= \langle \tilde{X} \rangle_{t} =   \langle X \rangle_{t}$, which completes the proof. \\
\end{proof}

This  lemma has some important ramifications. First of all, from an application of Wald's identity it follows that for any stopping time $T$ with   $\Exp_{i}[\langle u \rangle_{\cT}]<\infty$ for $i=0,\infty$,
 \begin{equation} \label{klfap}
\frac{1}{2} \, \Exp_{\infty}[\langle u \rangle_{\cT}] = \Exp_{\infty} [ - u_{\cT}]= \Exp_{\infty} \Bigl[ \log \frac{d\Pro_{\infty}}{d\Pro_{0}}\Big|_{\cF_{\cT}} \Bigr]
\end{equation}
and that on the event $\{\cT > \tau\}$ 
\begin{align} \label{kldel}
\begin{split}
\frac{1}{2}  \,  \Exp_{\tau}[\langle u \rangle_{\cT}-\langle u \rangle_{\tau}  \, | \; {\mathcal{F}}_{\tau}] 
&= \Exp_{\tau} \Bigl[\bigl( u_{\cT}- u_{\tau} \bigr) \, \ind{\cT > \tau}  \, | \; {\mathcal{F}}_{\tau} \Bigr] \\
&= \Exp_{\tau} \Bigl[ \log \frac{d\Pro_{\tau}}{d\Pro_{\infty}}\Big|_{\cF_{\cT}}  \, \Big| \; {\mathcal{F}}_{\tau} \Bigr],
\end{split}
\end{align}
which highlights the connection of the performance measure $\cJ_{M}$  with the notion of Kullback--Leibler information. However, the most important consequence of the previous lemma is that it allows us to  obtain closed--form expressions for the performance characteristics of the CUSUM rule and establish its optimality,
following the methodology developed in \cite{moustito}. 

\begin{lemma} \label{L1}
Suppose that condition (\ref{full}) holds. For any stopping time $T$, $c>0$ and $\tau \in [0,\infty)$, 
\begin{equation} \label{fin}
\Pro_{\tau}(\tcus<\infty)=1, \quad \Pro_{\infty}(\tcus<\infty)=1,
\end{equation}
where $\tcus:=\cT \wedge \cus = \min(\cT,\cus)$. Moreover, on the event $\{\tcus > \tau\}$ we have
\begin{align} \label{perf}
\begin{split}
\Exp_{\tau}[\langle u \rangle_{\tcus} - \langle u \rangle_{\tau} |\cF_{\tau}] &= \Exp_{\tau}[g(y_{\tcus})-g(y_{\tau})| \cF_{\tau}] , \\
\Exp_{\infty}[\langle u \rangle_{\tcus} - \langle u \rangle_{\tau} | \cF_{\tau}] &= \Exp_{\infty}[h(y_{\tcus})-h(y_{\tau})| \cF_{\tau}],
\end{split}
\end{align}
where the functions $g$ and $h$ are defined  as follows:
\begin{equation} \label{gh}
g(x) :=e^{-x}+x-1, \quad  h(x):=e^{x}-x-1, \quad x \geq 0.
\end{equation}
\end{lemma}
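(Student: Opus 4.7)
The plan is to write $y_t = u_t - m_t$ with $m_t := \inf_{0 \le s \le t} u_s$ a continuous, non-increasing process whose differential $dm$ is carried by $\{s: y_s = 0\}$, and then to apply It\^o's formula to $g(y_t)$ and $h(y_t)$, using the two semimartingale decompositions of $u$ supplied by Lemma \ref{L0}. Two features make the resulting expressions collapse nicely. First, since $g'(0) = h'(0) = 0$, the contributions $\int_0^t g'(y_s)\,dm_s$ and $\int_0^t h'(y_s)\,dm_s$ vanish, so the reflection at $0$ leaves no trace. Second, the algebraic identities $g'(x)+g''(x) = 1$ and $h''(x)-h'(x) = 1$ collapse the two drift pieces (the $\pm\tfrac{1}{2}\,d\langle u\rangle_s$ coming from Lemma \ref{L0} and the $\tfrac{1}{2}\phi''(y_s)\,d\langle y\rangle_s$ coming from It\^o, using $\langle y\rangle_t = \langle u\rangle_t$) into a pure multiple of $\langle u\rangle_t$. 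The outcome is decompositions
\begin{align*}
g(y_t) &= g(y_0) - \int_0^t g'(y_s)\,dX_s + \tfrac{1}{2}\langle u\rangle_t,\\
h(y_t) &= h(y_0) + \int_0^t h'(y_s)\,d\tilde X_s + \tfrac{1}{2}\langle u\rangle_t,
\end{align*}
which, modulo the normalization of $g$ and $h$, is the backbone of both claims.

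For (\ref{fin}), I would evaluate these decompositions at $\cus \wedge N$ for $N>0$. On this set $y_s \in [0,c]$, so $g'(y_s)$ and $h'(y_s)$ are bounded and the stochastic integrals are true martingales whose expectations vanish. Monotonicity of $g,h$ on $[0,c]$ then gives $\Exp_\infty[\langle u\rangle_{\cus \wedge N}] \le 2h(c)$ and $\Exp_0[\langle u\rangle_{\cus \wedge N}] \le 2g(c)$; monotone convergence in $N$ produces finite moments $\Exp_\infty[\langle u\rangle_{\cus}], \Exp_0[\langle u\rangle_{\cus}] < \infty$, and assumption (\ref{full})---which forbids $\langle u\rangle$ from staying bounded---forces $\cus<\infty$ almost surely under both measures. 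For $\Pro_\tau$ with $\tau \in (0,\infty)$ I would condition on $\cF_\tau$ and rerun the argument in the post-$\tau$ (i.e.\ $\Pro_0$) regime; the domination $\tcus \le \cus$ transfers the conclusion to $\tcus$.

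For the performance identities (\ref{perf}), I would apply the same two decompositions over the interval $[\tau, \tcus \wedge N]$ on $\{\tcus > \tau\}$, take $\cF_\tau$-conditional expectations under $\Pro_\tau$ and $\Pro_\infty$ respectively, use boundedness of the integrands on $[0,c]$ to kill the stochastic integrals, and pass to the limit in $N$ via monotone convergence, licensed by the finite moments just established. The one step that is not pure bookkeeping is the Tanaka-type reduction $\int_0^t \phi'(y_s)\,dm_s = \phi'(0)(m_t - m_0)$ for $\phi \in \{g,h\}$; this is exactly where the continuity of paths is indispensable, since it is via Lemma \ref{L0} that $u$ is a genuine continuous semimartingale, $m$ is its running minimum, and $dm$ is supported on $\{y=0\}$. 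Once that identification is secured, everything else reduces to stopped-martingale arguments that are routine.
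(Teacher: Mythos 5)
Your overall route is the paper's: apply It\^o's formula to $g(y)$ and $h(y)$ using the two decompositions of $u$ from Lemma \ref{L0}, kill the reflection term via $g'(0)=h'(0)=0$, collapse the drift terms via $g'+g''=1$ and $h''-h'=1$, then stop, take (conditional) expectations, and pass to the limit. The two decompositions you display are exactly the ones the paper derives, and your deduction of (\ref{fin}) from the finiteness of $\Exp_{\infty}[\langle u\rangle_{\cus}]$ and $\Exp_{0}[\langle u\rangle_{\cus}]$ together with condition (\ref{full}) is sound in outline.

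The one genuine gap is in your localization. You stop at a deterministic time $N$ and claim that, because $g'(y_s)$ and $h'(y_s)$ are bounded on $[0,c]$, the stochastic integrals $\int_0^{\cus\wedge N} g'(y_s)\,dX_s$ and $\int_0^{\cus\wedge N} h'(y_s)\,d\tilde X_s$ are true martingales with vanishing expectation. A bounded integrand against a continuous \emph{local} martingale does not produce a true martingale: take the integrand $\equiv 1$ and you recover $X_N$ itself, which need not be integrable. Here $X$ and $\tilde X$ are only guaranteed to be local martingales (they come from the multiplicative decomposition of the likelihood ratios, and nothing in the hypotheses bounds $\Exp_{0}[\langle X\rangle_N]$ or $\Exp_{\infty}[\langle \tilde X\rangle_N]$). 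The paper avoids this by localizing at $\tcusn:=\tcus\wedge\inf\{t\ge 0:\langle u\rangle_t\ge n\}$, which forces $\int_{\tau}^{\tcusn}\bigl(g'(y_s)\bigr)^2\,d\langle X\rangle_s\le (g'(c))^2\,n$ deterministically, so the stopped integral is a genuine square-integrable martingale. Your argument is repaired either by adopting that stopping time or by inserting a reducing sequence $\sigma_k\uparrow\infty$ for the local martingale and letting $k\to\infty$, with monotone convergence on the $\langle u\rangle$ side and bounded convergence (not monotone convergence, since $g(y_{\tcus\wedge N})$ is not monotone in $N$, only bounded by $g(c)$) on the $g(y)$, $h(y)$ side. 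With that fix, the remainder --- the contradiction with (\ref{full}) on $\{\cus=\infty\}$ and the conditional identities on $\{\tcus>\tau\}$ --- goes through as you describe.
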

% are the solutions to the following boundary value problems
%\begin{align} \label{gh}
%%\begin{split}
%&g''(x)+   g'(x)=-1, \quad x \in [0, c] ; \quad g(c)=0, \; g'(0)=0,\\
%& h''(x) -  h'(x)=-1, \quad x \in [0, c]; \quad h(c)=0, \; h'(0)=0.
%\end{split}
%\end{align}

\begin{proof}
Let us introduce the following notation:
$$\tcusn :=\tcus \wedge \inf\{t \geq 0: \langle u \rangle_{t} \geq n\}, \quad n \in \mathbb{N}.$$
From an application of It\^{o}'s rule we have:
\begin{align*} %\label{ito}
g(y_{\tcusn})-g(y_{\tau}) &= \int_{\tau}^{\tcusn} g'(y_{s}) dy_{s} + \frac{1}{2} \int_{\tau}^{\tcusn} g''(y_{s}) d\langle y \rangle_{s}.
\end{align*}
From Lemma \ref{L0} and the definition of the CUSUM statistic $\{y_{t}\}$ we have  
$$y_{t}= u_{t}-m_{t}= -X_{t}+ \frac{1}{2} \, \langle u \rangle_{t} -  m_{t}, \quad m_{t}:=\inf_{0 \leq s \leq t} u_{s},$$
and consequently $\langle y \rangle_{t}=  \langle u \rangle_{t}=  \langle X \rangle_{t}$, therefore we can write 
\begin{align*} % \label{ito2}
g(y_{\tcusn})-g(y_{\tau}) &= \int_{\tau}^{\tcusn} g'(y_{s}) \Bigl[-dX_{s}+ \frac{d\langle u \rangle_{s}}{2} - dm_{s}\Bigr]+ \frac{1}{2} \int_{\tau}^{\tcusn} g''(y_{s}) d\langle u \rangle_{s}. 
\end{align*}
Now, with a re-arrangment in its right-hand side and using the fact that the measure $dm_{s}$ is carried by the set $\{y_{s}=0\}$,  
the previous relationship takes the following form:
\begin{align*}
g(y_{\tcusn})-g(y_{\tau}) &= \frac{1}{2}  \int_{\tau}^{\tcusn} (g'+g'')(y_{s}) \,  d\langle u \rangle_{s} -\int_{\tau}^{\tcusn} g'(y_{s}) dX_{s} - \int_{\tau}^{\tcusn} g'(0) \,dm_{s}.
\end{align*}
From (\ref{gh}) it is clear that $g'(x)+g''(x)=1$ for every $x \geq 0$ and $g'(0)=0$, thus
\begin{align*}
\begin{split}
g(y_{\tcusn})-g(y_{\tau}) &= \frac{1}{2} \, \int_{\tau}^{\tcusn} d\langle u \rangle_{s} -\int_{\tau}^{\tcusn} g'(y_{s}) dX_{s}.
\end{split}
\end{align*}
Taking conditional expectation under $\Pro_{\tau}$ given $\cF_{\tau}$, on the event $\{\tcus > \tau\}$ we have:
\begin{align} \label{uuu}
\Exp_{\tau} \Bigl[g(y_{\tcusn})-g(y_{\tau})|\cF_{\tau} \Bigr] &= \Exp_{\tau} \Bigl[ \frac{1}{2} \, \int_{\tau}^{\tcusn} d\langle u \rangle_{s} \Big| \cF_{\tau} \Bigr] - \Exp_{\tau} \Bigl[\int_{\tau}^{\tcusn} g'(y_{s}) dX_{s} \Big| \cF_{\tau} \Bigr].
\end{align}
Since  $0 \leq y_{s} \leq c$ on  $\{ \tau  < s < \tcusn \leq \tcus \}$ and $g'$ is an increasing function on $[0,\infty)$, we have $g'(y_{s})\leq g'(c)$. Moreover, by the definition of the stopping time $\tcusn$ it is clear that $\langle X \rangle_{\tcusn}=\langle  u \rangle_{\tcusn} \leq n$, therefore   
$$\int_{\tau}^{\tcusn} (g'(y_{s}))^{2} \, d \langle  X \rangle_{s} \leq (g'(c))^{2} \, \langle  X \rangle_{\tcusn} \leq (g'(c))^{2} \, n< \infty.$$
Then, since $\Pro_{\tau}$ coincides with  $\Pro_{0}$ on $\cF_{\tau}$ and $X$ is local martingale (that starts from 0) under $\Pro_{0}$, 
the second term in the right hand side of (\ref{uuu}) vanishes and consequently we can write (on the event $\{\tcus > \tau\}$): 
\begin{align} \label{ooo}
\Exp_{\tau}[g(y_{\tcusn})-g(y_{\tau})|\cF_{\tau}]=  \Exp_{\tau} \Bigl[ \frac{1}{2} \, \int_{\tau}^{\tcusn} d\langle u \rangle_{s} \Big| \cF_{\tau} \Bigr].
\end{align}
Since $g$ is an increasing function on $[0,\infty)$ and $y_{\tcusn} \in [0,c]$, it is clear  that  the left-hand side of this equality is bounded by $g(c)-g(0)=g(c)$. Moreover, due to condition (\ref{full}), $\tcusn$ converges to $\tcus$ $\Pro_{0}$--a.s. as $n \rightarrow \infty$. Therefore, letting $n\rightarrow \infty$ in the right-hand side of (\ref{ooo}) and applying the Monotone Convergence Theorem, which we can do since $\langle  u \rangle_{t}$ is increasing, we obtain
\begin{align} \label{yyy}
g(c) &\geq \Exp_{\tau}\Bigl[  \frac{1}{2} \, \int_{\tau}^{\tcus} d\langle u \rangle_{s} \Big| \cF_{\tau}\Bigr] \geq \Exp_{\tau} \Bigl[ \frac{1}{2} \, \ind{\tcus =\infty}  \int_{\tau}^{\infty} d\langle u \rangle_{s} \Big|\cF_{\tau} \Bigr]. 
\end{align}
From condition (\ref{full}) and Lemma \ref{L0} it follows that $\Pro_{0} (\int_{\tau}^{\infty} d\langle u \rangle_{s}=\infty)=1$  and consequently $\Pro_{\tau}(\int_{\tau}^{\infty} d\langle u \rangle_{s}=\infty)=1$
since by definition $\Pro_{\tau}$ coincides with $\Pro_{0}$ on $\cFt$ for $t\geq \tau$. Then, from the law of iterated expectation we have  
$$\Exp_{\tau} \Bigl[\Pro_{\tau} \Bigl(\int_{\tau}^{\infty} d\langle u \rangle_{s}=\infty \Big|\cF_{\tau} \Bigr)\Bigr]=1,$$
which implies $\Pro_{\tau}(\int_{\tau}^{\infty} d\langle u \rangle_{s}=\infty |\cF_{\tau})=1$ and consequently $\Pro_{\tau}(\tcus =\infty)=0$, otherwise the right--hand side in (\ref{yyy}) becomes infinite, due to condition (\ref{full}), which leads to a contradiction. 

Now, going back to (\ref{ooo}) and letting $n \rightarrow \infty$ in both sides simultaneously, from an application of Bounded (Monotone) Convergence Theorem for the left (right)--hand side we obtain
%\begin{align} \Exp_{\tau}[g(y_{\tcus})-g(y_{\tau})|\cF_{\tau}]  &=  \Exp_{\tau} \Bigl[\int_{\tau}^{\tcus} d\langle u \rangle_{s} |\cF_{\tau} \Bigr] \end{align}
%since $\tcusn \rightarrow \tcus$ as $n \rightarrow \infty$, % Since $\langle u \rangle_{t}$  is an increasing function, it follows that $$(\langle u \rangle_{\tcus} - \langle u \rangle_{\tau})^{+}....$$
the first relationship in (\ref{gh}). The second relationship in (\ref{gh}) and the fact that  $\Pro_{\infty}(\cus =\infty)=0$ can be shown in a similar way, which completes the proof. \\
\end{proof}

When we set $\cT=\cus$ in the previous lemma, we obtain some very interesting information regarding the behavior of the CUSUM test. First of all, from (\ref{fin}) it follows that $\cus$ terminates almost surely for any change--point, $\tau$, as well as when the change never occurs.  Therefore, it is almost sure that the CUSUM test will raise a false alarm, and this explains why a constraint is imposed on the \textit{rate} of false alarms, not the probability that a false alarm will occur.  

Furthermore, from the first relationship in (\ref{perf}) it follows that  on the event $\{\cus >\tau\}$ we have
\begin{equation*} 
\frac{1}{2} \, \Exp_{\tau}[ \langle u \rangle_{\cus} - \langle u \rangle_{\tau} |\cF_{\tau}] = \Exp_{\tau}[g(y_{\cus})-g(y_{\tau})| \cF_{\tau}]= g(c)-g(y_{\tau}),
\end{equation*}
since $y_{\cus}=c$, due to the path--continuity of the CUSUM process, $\{y_{t}\}$. Therefore, since $y_{\tau}\in [0,c]$ on $\{\cus >\tau\}$ and $g$ is  increasing function on $[0,\infty)$, it follows that
\begin{align} \label{percus}
\begin{split}
\cJ_{M}[\cus] &= \sup_{ \tau \geq 0} \; \text{esssup}  \; \frac{1}{2} \, \Exp_{\tau} [\bigl(\langle u \rangle_{\cus}-\langle u \rangle_{\tau} \bigr) \, \ind{\cus >\tau} \, | \; {\mathcal{F}}_{\tau}] \\
              &= \sup_{ \tau \geq 0} \; \text{esssup}  \; \Exp_{\tau} [ (g(c)-g(y_{\tau})) \, \ind{\cus >\tau} \,| \; {\mathcal{F}}_{\tau}]=g(c).
\end{split}              
\end{align}
In other words, the worst-case scenario for the CUSUM test occurs when the change occurs at time $\tau=0$, i.e. $\cJ_{M}[T]= \frac{1}{2} \, \Exp_{0}[\langle u \rangle_{T}]$.

Finally, setting $\tau=0$ in the second relationship in (\ref{perf}) we obtain  
$$ \frac{1}{2} \, \Exp_{\infty}[\langle u \rangle_{\cus}]= \Exp_{\infty}[h(y_{\tcus})]=h(c).$$
Therefore, for the CUSUM test to satisfy the false alarm constraint with equality, its threshold $c$ should be chosen 
as the (unique) solution to the non--linear equation $h(c)=\gamma$. The following lemma shows that, without loss of generality, we can actually 
restrict ourselves to stopping times that satisfy the  false alarm constraint with equality.

\begin{lemma} \label{L2}
Suppose that condition (\ref{full}) holds. For any stopping time  $T$ such that  $\frac{1}{2} \,\Exp_{\infty}[\langle u \rangle_{\cT}]> \gamma$, there exists some $c>0$ so that  $  \frac{1}{2} \, \Exp_{\infty}[\langle u \rangle_{\tcus}]= \gamma$ and $\cJ_{M}[\tcus] \leq \cJ_{M}[T]$.
\end{lemma}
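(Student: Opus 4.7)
I would prove the lemma by truncating $T$ with a CUSUM stopping time $\cus$, choosing the threshold $c$ so that the mixture $\tcus = T \wedge \cus$ saturates the false alarm constraint exactly, and then observing that this truncation cannot increase the K--L detection divergence. Both steps are driven by the monotonicity of $t \mapsto \langle u \rangle_t$; the second step is essentially free, so the content of the argument lies in Step 1.

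\textbf{Step 1: existence of $c$.} Define
\[
\phi(c) := \tfrac{1}{2}\,\Exp_{\infty}[\langle u \rangle_{T \wedge \cus}], \qquad c \geq 0.
\]
Since $c \mapsto \cus$ is non-decreasing and $t \mapsto \langle u \rangle_t$ is non-decreasing, $\phi$ is non-decreasing. The boundary values are immediate: $\phi(0) = 0$ because $S_{0} = 0$, while the path-continuity of $\{y_{t}\}$ together with condition (\ref{full}) (which forces $y_{t}$ to be locally bounded but $\langle u \rangle_{t} \to \infty$) gives $\cus \to \infty$ $\Pro_{\infty}$--a.s.\ as $c \to \infty$, so $T \wedge \cus \uparrow T$ and monotone convergence yields $\phi(c) \to \tfrac{1}{2}\,\Exp_{\infty}[\langle u \rangle_{T}] > \gamma$. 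For continuity I would use the representation $\phi(c) = \Exp_{\infty}[h(y_{T \wedge \cus})]$ coming from Lemma \ref{L1} at $\tau = 0$: writing $y_{T \wedge \cus} = c\,\ind{\cus \leq T} + y_{T}\,\ind{\cus > T}$ (valid since $y_{\cus}=c$ by path continuity), dominated convergence applied to the bounded dominator $h(c)$ on compact ranges of $c$ transfers the continuity of $c \mapsto \cus$ (in turn inherited from path continuity of $\{y_{t}\}$) to $\phi$. With $\phi$ continuous, non-decreasing, $\phi(0)=0$ and $\lim_{c \to \infty}\phi(c) > \gamma$, the intermediate value theorem delivers a $c > 0$ with $\phi(c) = \gamma$.

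\textbf{Step 2: the comparison.} For this $c$, the pointwise inequality $\tcus \leq T$ combined with the monotonicity of $\langle u \rangle$ gives, for every $\tau \geq 0$,
\[
\bigl(\langle u \rangle_{\tcus} - \langle u \rangle_{\tau}\bigr)^{+} \;\leq\; \bigl(\langle u \rangle_{T} - \langle u \rangle_{\tau}\bigr)^{+} \qquad \text{a.s.}
\]
Taking $\Pro_{\tau}$-conditional expectation given $\cF_{\tau}$, then the essential supremum, and finally the supremum over $\tau \geq 0$, yields $\cJ_{M}[\tcus] \leq \cJ_{M}[T]$, which is the desired conclusion.

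\textbf{Main obstacle.} The only genuinely delicate point is the continuity of $\phi$ in Step 1: as a monotone function, $\phi$ is automatically left-continuous by monotone convergence, but right-continuity of $c \mapsto \cus$ can fail at levels $c$ at which the reflected CUSUM path $\{y_{t}\}$ lingers on a set of positive $\Pro_{\infty}$-probability, creating a potential jump in $\phi$. In the continuous fractional settings of Section 3 such pathological levels do not arise; in full generality, if $\gamma$ were to land inside such a jump, one can replace $\tcus$ by an independent randomization between two consecutive thresholds to achieve the equality $\phi = \gamma$, and by linearity of both sides of the inequality in Step 2 this mixing preserves the comparison $\cJ_{M}[\tcus] \leq \cJ_{M}[T]$.
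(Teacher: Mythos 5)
Your proposal is correct and follows essentially the same route as the paper: define $\psi(c):=\tfrac{1}{2}\Exp_{\infty}[\langle u \rangle_{\tcus}]$, use the identity $\psi(c)=\Exp_{\infty}[h(y_{\tcus})]$ from Lemma \ref{L1} with $\tau=0$ together with the intermediate value theorem to find $c$, and conclude from $\tcus\leq T$ and the monotonicity of $\langle u\rangle$. You are in fact more careful than the paper about the continuity of $\psi$ (which the paper simply asserts); the potential right-discontinuity you flag does not occur here because, by the Dambis--Dubins--Schwarz time change and condition (\ref{full}), the reflected process $y$ immediately exceeds any level $c_0>0$ it reaches, so no randomization is needed.
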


\begin{proof}
Consider an arbitrary stopping time with  $ \frac{1}{2} \, \Exp_{\infty}[\langle u \rangle_{\cT}]> \gamma$ and define the function $\psi(c):=  \frac{1}{2} \,\Exp_{\infty}[\langle u \rangle_{\tcus}]$. From the second relationship in \eqref{perf} it follows that  $\psi(c) = \Exp_{\infty}[h(y_{\tcus})]$, which implies that $\psi(c)$ is a continuous function. Then, since 
$\psi(0)=0$, $\psi(\infty)= \frac{1}{2} \, \Exp_{\infty}[\langle u \rangle_{\cT}]>\gamma$, there exists some $c>0$ so that $\psi(c)= \gamma$. By definition, $\tcus \leq \cT$, which implies $\cJ_{M}[\tcus] \leq \cJ_{M}[T]$ and completes the proof. \\
% thus it remains to show that $\cJ_{M}[\tcus] \geq \cJ'_{M}[\tcus]$.% since  by definition $\tcus \leq \cT$. % it is clear that  $\cJ_{M}[\tcus] \leq \cJ_{M}[T]$ which proves the claim.  (c) This can now be shown as in \cite{moustito}, Theorem 2.
\end{proof}

%Lemma \ref{L2} implies that we only need to consider stopping times that satisfy the false alarm constraint with equality.
%, that is $\Exp_{\infty}[\langle u \rangle_{T}]=\gamma$. Moreover, setting $\cT=\cus$ in (\ref{perf2}) it follows that the threshold of the CUSUM test should be chosen so that $\gamma=\Exp_{\infty}[\langle u \rangle_{\cus}]= h(y_{\cus})=h(c).$ 
%We can now provide a sketch for the proof of Theorem \ref{MAIN}.

%from (\ref{condinf}) we have $\Pro_{\infty}$-a.s. on $\{\cus >t\}$:
%\begin{equation} \label{perf}
%  =   \Exp_{\infty}[ A_{\cus} - A_{t} | \cFt] = g(-c)-g(-y_{t}).
%\end{equation} 
%The second part in (\ref{perfor}) now follows by setting $t=0$ in (\ref{perf}). 
%In order to prove (\ref{moustach}), it suffices to show that $\Pro_{\tau}$-a.s. on $\{\cus \geq \tau\}$ 
%\begin{equation*}
%\Exp_{\tau} [(u_{\cT}-u_{\tau}) | \; {\mathcal{F}}_{\tau} ]  = \Exp_{\tau} [A_{\cT}-A_{\tau} | \; {\mathcal{F}}_{\tau} ] 
%\end{equation*}for any fixed $\tau \in [0, \infty]$. This follows from an application of optional sampling theorem  and a localization argument, 
%as long as the right-hand side is finite, which is indeed the case if $\Exp_{0}[A_{\cT}]< \infty$ and  $\Exp_{\infty}[A_{\cT}]<\infty$. 
%We now prove Theorem \ref{MAIN}. 

\begin{proof}[Proof of Theorem \ref{MAIN}]
The proof is based on the fact that for any stopping time $T$ and any $c>0$ we have the following lower bound
$$\cJ_{M}[\cT] \geq  \frac{\Exp_{\infty}[e^{y_{\tcus}} g(y_{\tcus})]}{\Exp_{\infty}[e^{y_{\tcus}}]},$$
which can be shown in exactly the same way as in Theorem 2 of \cite{moustito}. From (\ref{percus}) and the  fact that  $y_{\cus}=c$ almost surely  
it follows that both sides of the inequality become equal to $g(c)$ when we set $\cT=\cus$. Due to this observation and Lemma \ref{L2},
it suffices to show that 
$$g(c) = \inf_{T} \frac{\Exp_{\infty}[e^{y_{\tcus}} g(y_{\tcus})]}{\Exp_{\infty}[e^{y_{\tcus}}]} \quad \text{when} \quad \frac{1}{2} \, \Exp_{\infty}[\langle u \rangle_{T}]=\gamma.$$
This can be done in exactly the same way as in Theorem 3 of \cite{moustito}.  
\end{proof}

\subsection{The case of diffusion-type processes} \label{dt}
Let us now illustrate Theorem \ref{MAIN} in the case that the observed process $\{\xi_{t}\}$ is a standard Brownian motion  before the change and adopts a (random, in general) drift after the change. More specifically, let $\Pro_{\infty}$ be the Wiener measure and suppose that the  post-change measure $\Pro_{0}$ is induced by the following dynamics 
\begin{equation*} %\label{sdeY000}
\xi_{t}= W_{t}+ \int_{0}^{t} \mu_{s} \, ds, \quad t \geq 0
\end{equation*}
where $\{W_{t}\}$ is a standard Brownian motion under $\Pro_{0}$ and $\{\mu_t\}$ an $\{\cFt\}$--adapted process
% which justifies the term \textit{diffusion--type}, 
that satisfies 
\begin{align}  
& \Pro_{\infty} \Bigl( \int_{0}^{t}  \mu_{s}^{2} \, ds < \infty  \Bigr)=1,  \quad t \geq 0 , \label{con1}\\
&  \Exp_{\infty} \Bigl[ \exp \Bigl\{ \int_{0}^{t}  \mu_{s} \,  d\xi_{s} - \frac{1}{2} \, \int_{0}^{t} \mu_{s}^{2} \, ds \Bigr\} \Bigr] =1, \quad t \geq 0. \label{con2}
\end{align}
These two conditions  guarantee that $\Pro_{0}$ is indeed well--defined and locally equivalent with $\Pro_{\infty}$. Moreover, from Girsanov's theorem we obtain the following explicit representation for their log-likelihood ratio, 
\begin{equation*} %\label{uito}
u_{t}= \int_{0}^{t}  \mu_{s} \,  d\xi_{s} - \frac{1}{2} \, \int_{0}^{t} \mu_{s}^{2} \, ds , \quad t \geq 0.
\end{equation*}
It is then clear that $\langle u \rangle_{t} =  \int_{0}^{t} \mu_{s}^{2} \, ds$ and Theorem \ref{MAIN} implies that 
the CUSUM test is $\cJ_{M}$--optimal if the following condition is also satisfied
\begin{equation}
\int_{0}^{\infty} \mu_{s}^{2} \, ds  = \infty, \quad \Pro_{0}, \Pro_{\infty}-\text{a.s.}  \label{fullito}
\end{equation}
This is exactly the result obtained in \cite{moustito}. When the process $\{\mu_{t}\}$ reduces to a constant, $\langle u \rangle_{t}$ is proportional to $t$
and Theorem \ref{MAIN} implies that the CUSUM test is $\cJ_L$--optimal, which is also a well--known fact. 
 
%However, there are many interesting continuous-path stochastic processes that are not of diffusion-type, thus

However,  Theorem \ref{MAIN} is a non--trivial generalization of existing optimality results, as there are many continuous--path processes that are not of diffusion--type, not even semimartingales. We  illustrate this point in the next section.

% we consider the case of \textit{fractional} diffusion-type processes.  
%This is exactly the problem considered in Moustakides in \cite{moustito}, where it was also shown that \eqref{fullbm} 
%is satisfied in the Ornstein-Uhlenbeck case, that is when $\mu_{t}= \theta  \xi_{t}$, where $\theta$ is some non-zero constant,
%and it is straightforward to see that \eqref{fullbm} is satisfied when $\mu_{t}= t^{\alpha}$ with $\alpha >-1/2$.  
%Furthermore, we will show that the CUSUM test optimizes Lorden's original criterion for a class of processes that includes BM.
% was shown in  showed that for a class of It\^{o} processes that satisfy a ``full--energy'' condition, is $\cJ_{M}$--o ptimal, i.e. for every $\gamma>0$ there exists a threshold $c$ (that depends on $\gamma$) so that \begin{equation}   \label{cusopt} \cJ_{M}[\cus] = \inf_{\cT \in \cC_{M}(\gamma)}  \cJ_{M}[\cT].
%\end{equation} A direct corollary of this result is that the CUSUM test $\cus$ is $\cJ_{L}$--optimal for detecting a \textit{linear} drift in a \textit{standard} Brownian motion, a result that had been shown earlier by Shiryaev and Beibel. 

\section{The case of fractional diffusion-type processes}
Before we consider the application of Theorem \ref{MAIN} in the context of fractional diffusion--type processes, 
let us first define \textit{fractional Brownian motion}, the basic building block of these processes,  and present its main properties. 

\subsection{Fractional Brownian Motion: A Quick Review} 
Let $\{B_{t}^{H}\}$ be a stochastic process defined on the canonical space $(\Omega, \cF)$ and  let $\Pro$ be a probability measure on this space. 
We will say that  $\{B_{t}^{H}\}$ is a \textit{fractional Brownian motion} (fBm) under $\Pro$, or simply a $\Pro$-fBm, if it is a centered, continuous, Gaussian process  with covariance structure 
\begin{equation*} %\label{fBmDef}
\Exp [B^{H}_{t} \, B_{s}^{H}] = \frac{1}{2} \left( t^{2H} + s^{2H} -|t-s|^{2H} \right), \quad t,s \geq 0,
\end{equation*} 
where $\Exp$ refers to expectation with respect to $\Pro$. As a consequence of its definition, the process $\{B_{t}^{H}\}$ has a number of interesting properties. Thus,

\begin{enumerate}
\item[(i)]  it is $H$-self-similar,  in the sense that $\{B_{t}^{H}\}_{t \geq 0}$ has the same finite-dimensional distributions as  $\left\{c^{-H}\; B^{H}_{ct} \right\}_{t \geq 0}$ for every $c>0$,
\item[(ii)] it has  stationary  increments, which however are independent only when $H=1/2$. When $H<(>)1/2$, they are  negatively (positively) correlated and the process exhibits \textit{short(long)--range dependence}, in the sense that 
$$\sum_{n=1}^{\infty} \Exp[(B^{H}_{n} - B^{H}_{n-1}) B^{H}_{1}]< (=) \infty,$$
\item[(iii)] it has H\"older continuous paths of order $H-\epsilon$ for every $0 < \epsilon <H$, 
% in the sense that for all $T>0$, there exists a nonnegative random variable $C_{\epsilon, T}$, which is  $L^{p}$-integrable for all $p\geq 1$, so that: $$|B^{H}_{t} - B^{H}_{s}| \leq C_{\epsilon, T} \, |t-s|^{H-\epsilon}, \quad \forall s,\;t\in[0,T] \quad  \Pro-a.s.$$ 
\item[(iv)] it has finite $1/H$-variation (in an $\text{L}^{1}$ sense), equal to $c'_{H} t$ on any finite interval $[0,t]$, where 
$$c'_{H}:= \int_{\mathbb{R}} |x|^{1/H} \phi(x) dx, \quad \phi(x):= \frac{1}{\sqrt{2 \pi}} \, e^{-x^{2}/2}.$$
%with $\phi(x)$ representing the probability density function of a standard normal distribution. 
\item[(v)] is \textit{not} a semimartingale, i.e. it does not admit a decomposition as the sum of a local martingale and a term of finite variation  (see, for example, \cite{rogers}). However, the transformed process  
\begin{align}  \label{M}
 \begin{split}
M_{t}^{H}  &:= \int_{0}^{t} k_{H}(t,s) \, dB^{H}_{s} , \quad t \geq 0 
\end{split}
\end{align}
is a square--integrable martingale with  quadratic variation  $\langle M^{H} \rangle_{t}= \lambda^{-1}_{H} \,  t^{2-2H}$, where 
$$ k_{H}(t,s) := c_{H}^{-1} \;s^{\frac{1}{2}-H}\;(t-s)^{\frac{1}{2} - H}, \quad 0 \leq s \leq t, $$
$c_{H}$ and $\lambda_{H}$ are positive constants defined as follows
\begin{align*} 
c_{H} &:= 2H\;\Gamma\left( 3/2- H \right)\; \Gamma\left(H + 1/2 \right) ,   \\
\lambda_{H} &:= \frac{2H\;\Gamma(3-2H)\;\Gamma\left(H + 1/2 \right)}{\Gamma\left( 3/2 - H \right)},
\end{align*} 
and $\Gamma(x):= \int_{0}^{\infty} s^{x-1} e^{-s} \, ds$ is the  Gamma function. 
This result  was shown by Molchan \cite{molchan} and more recently by Norros et. al \cite{NVV}, 
where the process $M^{H}$ was called the \textit{fundamental martingale associated with fBm}. 
\end{enumerate}

For an exhaustive  treatment on the properties of fBm we refer to Chapter 5 of Nualart \cite{N}. Here, we will only add an extension of L\'evy's classical characterization theorem to fBm that was recently established by Hu et al. \cite{hu} (see also Mishura and Valkeila \cite{mis}). This result essentially says that properties (iii), (iv) and (v) characterize fBm. More specifically, if $Y$ is a continuous, centered, square-integrable stochastic process with 
\begin{itemize}
\item[(a)] H\"older continuous paths of order $H-\epsilon$ for any $\epsilon >0$,
\item[(b)] finite $1/H$--quadratic variation (in an $L^{1}$--sense) that is equal to $c'_{H} t$ on any interval $[0,t]$,
\item[(c)] and the process $M$,  defined as $M^{H}$ in (\ref{M}) with $B^{H}$ replaced by $Y$,  is a martingale, 
whose quadratic variation is absolutely continuous almost surely with respect to the Lebesgue measure when $H>1/2$,
\end{itemize}
then $Y$ is a fBm with Hurst index $H$.

%Based on this characterization and the classical Girsanov theorem, we can obtain sufficient conditions on the drift $\{\mu_{t}\}$ so that the post-change measure $\Pro_{0}$ is well--defined and the CUSUM test is optimal in this context. 

\subsection{The case of a fBm adopting a random drift}
We now return to the setup of our change detection problem and we focus on the special case that the observed process $\{\xi_{t}\}$ is a fBm that adopts a random drift after the change. More specifically, in what follows we assume that $\{\xi_{t}\}$ is a fBm with Hurst index H under $\Pro_{\infty}$, whereas 
$\Pro_{0}$ is induced by the following dynamics
\begin{align} \label{sdeY0}
\xi_{t} &= B_{t}^{H}+\int_{0}^{t} \mu_{s} \, ds, \quad t \geq 0,
\end{align}   
where $\{\mu_{t}\}$ is an $\{\cFt\}$--adapted process and $\{B_{t}^{H}\}$ a fBm  under $\Pro_{0}$ with the same Hurst index, $H$.

 When $H=1/2$, we recover the context of Subsection \ref{dt}, where we saw that  $\Pro_{0}$ is indeed  well--defined and locally equivalent to $\Pro_{\infty}$ when conditions (\ref{con1}) and (\ref{con2}) are satisfied and that the CUSUM test is $\cJ_{M}$--optimal if additionally condition (\ref{fullito}) holds. Our goal in this section is to obtain analogous conditions when $H \neq 1/2$, in which case $\{\xi_{t}\}$ is \textit{not} a semimartingale and the classical Girsanov theorem does not apply, thus the previous conditions are no longer appropriate. 
 
In order to do so, we will work with the transformed process
\begin{align}  \label{ZZZ}
\zeta_{t} &:= \int_{0}^{t} k_{H}(t,s) \, dB^{H}_{s} , \quad t \geq 0.
\end{align}
Since $\{\xi_{t}\}$ is a fBM under $\Pro_{\infty}$, from property (v) it follows that $\{\zeta_{t}\}$ is a square-integrable martingale with quadratic variation  $\langle \zeta \rangle_{t}=  \lambda^{-1}_{H} \,  t^{2-2H}$ under $\Pro_{\infty}$. This property allows us to obtain an explicit representation for the log-likelihood ratio process $\{u_{t}\}$ using a version of Girsanov's theorem, although the observed process $\{\xi_{t}\}$ itself is not a semimartingale. 
This is a well--known result in the Stochastic Calculus with respect to fBm (see \cite{DU}, \cite{KLB3}). Here,  we will provide a  proof that relies on the 
characterization of fBm that we previously discussed. 

In what follows,  we assume that the paths of $\{\mu_{t}\}$ are sufficiently smooth so that the process
\begin{equation} \label{Q}
Q_{t}:= \frac{d}{d\langle \zeta\rangle_{t}}\int_{0}^{t}k_{H}(t,s) \, \mu_{s} \, ds, \quad t \geq 0
\end{equation}
is well--defined, where the derivative is understood in the sense of absolute continuity.

\begin{theorem} \label{main}
If the following conditions hold, 
\begin{align*}
& \Pro_{\infty} \Bigl( \int_{0}^{t} Q^{2}_{s} \, d\langle \zeta \rangle_{s} < \infty \Bigr)=1, \quad t \geq 0,   \\
& \Exp_{\infty} \Bigl[ \exp \Bigl\{  \int_{0}^{t} Q_{s} \, d\zeta_{s} - \frac{1}{2}  \int_{0}^{t} Q^{2}_{s} \, d\langle \zeta \rangle_{s} \Bigr\} \Bigr] =1, \quad t \geq 0,
\end{align*}
then $\Pro_0$ is locally equivalent to $\Pro_{\infty}$ and their log-likelihood ratio process admits the following representation
\begin{equation} \label{llrfbm2}  %\frac{\text{d} \Pro_{\tau}}{\text{d}\Pro_{\infty}} \Big|_{\cFt} 
u_{t}=  \int_{0}^{t} Q_{s} \, d\zeta_{s} - \frac{1}{2}  \int_{0}^{t} Q^{2}_{s} \, d\langle \zeta \rangle_{s} , \quad  t \geq 0.
\end{equation}
Then, the CUSUM test is $\cJ_{M}$--optimal, if additionally the following condition is satisfied
\begin{align} \label{fullQ}
\int_{0}^{\infty} Q_{s}^{2} \, d\langle \zeta \rangle_{s} = \infty \quad  \Pro_{0}, \Pro_{\infty}-\text{a.s.} 
\end{align}
\end{theorem}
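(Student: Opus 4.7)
The plan is to obtain the log-likelihood representation (\ref{llrfbm2}) by a Girsanov-type change of measure applied to the fundamental martingale $\zeta$ defined in (\ref{ZZZ}) rather than to the non-semimartingale $\xi$, and then reduce $\cJ_{M}$-optimality to Theorem \ref{MAIN}. First, I would define the candidate Radon--Nikodym density
\[
Z_{t} := \exp\Bigl\{ \int_{0}^{t} Q_{s} \, d\zeta_{s} - \frac{1}{2} \int_{0}^{t} Q_{s}^{2} \, d\langle \zeta \rangle_{s} \Bigr\}.
\]
The first hypothesis ensures that the stochastic integral against the $\Pro_{\infty}$-martingale $\zeta$ is well-defined, so $Z$ is a positive $\Pro_{\infty}$-local martingale; the second hypothesis $\Exp_{\infty}[Z_{t}]=1$ upgrades it to a true martingale. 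I would use $Z_{t}$ to define a probability measure $\tilde{\Pro}_{0}$ on $\cFt$ for each $t>0$, locally equivalent to $\Pro_{\infty}$.

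Next, I would identify $\tilde{\Pro}_{0}$ with the measure $\Pro_{0}$ postulated in (\ref{sdeY0}). By Girsanov's theorem applied to the $\Pro_{\infty}$-martingale $\zeta$, under $\tilde{\Pro}_{0}$ the process $\tilde{M}_{t} := \zeta_{t} - \int_{0}^{t} Q_{s}\, d\langle \zeta \rangle_{s}$ is a continuous local martingale with the same quadratic variation $\lambda_{H}^{-1} t^{2-2H}$; by the very definition (\ref{Q}) of $Q$, the subtracted term equals $\int_{0}^{t} k_{H}(t,s) \mu_{s}\, ds$, so that $\tilde{M}$ is precisely the Molchan transform of the process $Y_{t} := \xi_{t} - \int_{0}^{t} \mu_{s}\, ds$. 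I would then invoke the Hu--Mishura--Valkeila fBm characterization recalled after property (v): path-continuity, H\"older regularity of order $H-\epsilon$, and finite $1/H$-variation equal to $c'_{H} t$ for $Y$ under $\tilde{\Pro}_{0}$ are all inherited from the corresponding $\Pro_{\infty}$-a.s.\ properties of $\xi$ (since $\tilde{\Pro}_{0}$ and $\Pro_{\infty}$ are equivalent on $\cFt$, and an absolutely continuous drift has vanishing $1/H$-variation when $1/H>1$), while the martingale hypothesis on the transform is supplied by $\tilde{M}$ itself. The characterization then forces $Y$ to be a $\tilde{\Pro}_{0}$-fBm with Hurst index $H$, so $\tilde{\Pro}_{0}$ and $\Pro_{0}$ coincide on each $\cFt$, yielding both the asserted local equivalence and the representation (\ref{llrfbm2}).

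Finally, from (\ref{llrfbm2}) I read off $\langle u \rangle_{t} = \int_{0}^{t} Q_{s}^{2}\, d\langle \zeta \rangle_{s}$, so condition (\ref{fullQ}) is exactly the full-energy condition (\ref{full}) required by Theorem \ref{MAIN}; the $\cJ_{M}$-optimality of the CUSUM test then follows directly. The step I expect to be the main obstacle is the identification $\tilde{\Pro}_{0} = \Pro_{0}$ via the fBm characterization: once the Girsanov change of measure is in hand, the pathwise properties of $Y$ under $\tilde{\Pro}_{0}$ must be verified carefully, and in the regime $H>1/2$ one must additionally check absolute continuity of $\langle \tilde{M} \rangle$ with respect to Lebesgue measure, a property traceable to the smoothness of $\mu$ built into the definition (\ref{Q}) of $Q$.
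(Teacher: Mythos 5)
Your proposal is correct and follows essentially the same route as the paper's proof: define $\Pro_{0}$ via the exponential density of $Q$ against the fundamental martingale $\zeta$, use Girsanov to show that $\zeta_{t}-\int_{0}^{t}Q_{s}\,d\langle\zeta\rangle_{s}$ is a $\Pro_{0}$-martingale which, by the definition of $Q$, is the Molchan transform of $\xi_{t}-\int_{0}^{t}\mu_{s}\,ds$, invoke the Hu--Nualart--Song / Mishura--Valkeila characterization to conclude that this shifted process is a $\Pro_{0}$-fBm, and then read off $\langle u\rangle_{t}=\int_{0}^{t}Q_{s}^{2}\,d\langle\zeta\rangle_{s}$ so that (\ref{fullQ}) is exactly condition (\ref{full}) of Theorem \ref{MAIN}. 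Your treatment of properties (a) and (b) (equivalence of measures plus the vanishing $1/H$-variation of the absolutely continuous drift) is in fact slightly more explicit than the paper's one-line remark that the shifted process "clearly" inherits them.
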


\vspace{0.2cm}

\begin{proof}
Due to Theorem \ref{MAIN}, we only need to show that the log--likelihood ratio $\{u_{t}\}$ admits representation (\ref{llrfbm2}). In order to do so, 
it suffices to show that if the post-change  measure $\Pro_{0}$ is defined by 
\begin{equation} \label{des}
\frac{\text{d} \Pro_{0}}{\text{d}\Pro_{\infty}} \Big|_{\cFt}= \exp \Bigl\{ \int_{0}^{t} Q_{s} \, d\zeta_{s} - \frac{1}{2}  \int_{0}^{t} Q^{2}_{s} \, d\langle \zeta \rangle_{s} \Bigr\}, \quad  t \geq 0,
\end{equation}
then the process
\begin{equation*} % \label{sdeY10}
B^{H}_{t}:= \xi_{t}-\int_{0}^{t} \mu_{s} \, ds, \quad  t \geq 0,
\end{equation*}
is a fBm under $\Pro_{0}$. Moreover, due to the characterization theorem that we discussed in the end of the previous subsection, it suffices to show that $\{B^{H}_{t}\}$ satisfies properties (a), (b) and (c). Since  $\{B^{H}_{t}\}$ is a ``shifted'' version of $\{\xi_{t}\}$, which is a fBm under $\Pro_{\infty}$, 
it clearly satisfies (a) and (b). It  remains to show that the process $\{\int_{0}^{t} k_{H}(t,s) \, dB_{s}^{H}\}$ is a $\Pro_{0}$--martingale whose quadratic variation is absolutely continuous with respect to the Lebesgue measure when $H>1/2$. Indeed, 
\begin{align} \label{sdeY2222}
\begin{split}
 \int_{0}^{t} k_{H}(t,s) \; d B_{s}^{H} &=  \int_{0}^{t} k_{H}(t,s) \, (d\xi_{s}- \mu_{s} \, ds) \\
 &=\zeta_{t}-  \int_{0}^{t} Q_{s} \, d\langle \zeta\rangle_{s}                                                             
\end{split}          
\end{align} 
where the second equality follows from the definitions of $\zeta$ and $Q$ in (\ref{ZZZ}) and (\ref{Q}), respectively. But from (\ref{des}) and Girsanov's theorem we know that the process in the right--hand side of (\ref{sdeY2222}) is a $\Pro_{0}$--martingale with quadratic variation $\langle \zeta \rangle_{t}=  \lambda^{-1}_{H} \,  t^{2-2H}$, 
which completes the proof. 
\end{proof}

In what follows, we apply this result in some interesting special cases.

\subsubsection{Fractional diffusions}
The conditions of Theorem \ref{main} are satisfied for a large class of fractional diffusions and any Hurst index $H$. This is the content of the following corollary, which is based on the results of Tudor and Viens \cite{TV}, Lemma 3. 

\begin{corollary}
The CUSUM is $\cJ_{M}$--optimal when $\mu_{t}= b(\xi_{t})$ and $b$ is a real function so that
\begin{enumerate}
\item[(i)] $x b(x)$ has a constant sign for all $x \geq 0$ and a constant sign for all $x \leq 0$,
\item[(ii)] $|b(x)/x|= c+ r(x)$ for all $x$, where $r(x) \rightarrow \infty$ as $x\rightarrow  \infty$. 
%  there exists a function $h(x)$ with $\lim_{x \rightarrow \infty} h(x)=0$ so that $\left| \frac{b(x)-c_{0}}{x}\right| = c_{1} + h(x)$ as $x\rightarrow \pm \infty$, where $c_{0}$, $c_{1}$ are arbitrary ? constants as in Proposition \ref{viensDrift} ???. 
\end{enumerate}
\end{corollary}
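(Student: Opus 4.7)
The plan is to verify the three hypotheses of Theorem \ref{main} for the choice $\mu_t = b(\xi_t)$: the pathwise integrability condition on $Q$, the Novikov-type condition that makes the exponential in \eqref{des} a true martingale, and the full-energy condition \eqref{fullQ}. Once these are in place, the optimality statement follows directly. The main tool will be Lemma 3 of \cite{TV}, where Tudor and Viens study the log-likelihood for exactly this class of fractional diffusions (where the drift depends on the observation through a function $b$ satisfying (i) and (ii)) via Malliavin calculus.

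First, I would write out $Q_t$ explicitly from its definition \eqref{Q}: since $\langle \zeta\rangle_t = \lambda_H^{-1} t^{2-2H}$ is smooth and $k_H(t,s) = c_H^{-1} s^{1/2-H}(t-s)^{1/2-H}$, the operator in \eqref{Q} is a weighted fractional derivative applied to $s \mapsto b(\xi_s)$. Tudor and Viens show (essentially by Malliavin-calculus arguments) that under conditions (i)--(ii), $Q$ is well-defined and both $\int_0^t Q_s^2 \, d\langle \zeta\rangle_s < \infty$ $\Pro_\infty$-a.s. and the Novikov-type expectation in Theorem \ref{main} equals $1$; the sign condition (i) is what prevents oscillatory cancellations in the kernel, while the growth bound hidden in (ii) (namely $|b(x)|$ grows faster than linearly) still gives integrability on compact time intervals because fBm has all Gaussian moments.

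The crux is therefore the full-energy condition \eqref{fullQ}, which has to be checked under \emph{both} $\Pro_\infty$ and $\Pro_0$. Under $\Pro_\infty$, $\xi$ is a fBm, and one can use condition (ii), $|b(x)/x| = c + r(x)$ with $r(x) \to \infty$, together with the law-of-the-iterated-logarithm-type lower bound for fBm ($\limsup |\xi_t|/t^H > 0$ a.s.), to show that the weighted integral $\int_0^t k_H(t,s)\,b(\xi_s)\,ds$ grows fast enough in $t$ that its $d\langle\zeta\rangle$-density $Q_t$ fails any square-integrable bound on $[0,\infty)$. Condition (i) is used here to guarantee that on the large sets where $|\xi_s|$ is large, the integrand keeps a definite sign and the accumulated drift cannot cancel; this is exactly how \cite{TV}, Lemma 3 is structured. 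Under $\Pro_0$, we use that $\xi$ satisfies the fractional SDE $d\xi_t = b(\xi_t)\,dt + dB_t^H$; the superlinear growth assumption (ii) on $b$ forces $|\xi_t|$ to still diverge at least as fast as under $\Pro_\infty$ (the drift only pushes $\xi$ farther from zero thanks to (i)), and the same growth comparison yields $\int_0^\infty Q_s^2\,d\langle\zeta\rangle_s = \infty$ $\Pro_0$-a.s.

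The main obstacle is the last step: passing from the pointwise growth hypothesis on $b$ to divergence of the $Q$-integral. Unlike the classical Brownian case, $Q_t$ is not simply $b(\xi_t)$ but a fractional-derivative transform of the drift path, so one has to control how this transform interacts with the long-memory fluctuations of the fractional process. This is precisely the technical heart of \cite{TV}, Lemma 3, and I would cite that lemma to import the needed asymptotic lower bound on $\int_0^t Q_s^2 \, d\langle \zeta\rangle_s$, which makes the divergence statement for \eqref{fullQ} immediate and completes the verification of the hypotheses of Theorem \ref{main}.
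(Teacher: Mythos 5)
Your proposal takes essentially the same route as the paper: the paper offers no argument for this corollary beyond invoking Theorem \ref{main} and citing Lemma 3 of \cite{TV} for the verification of its hypotheses (well-posedness of $Q$, the Girsanov conditions, and the full-energy condition \eqref{fullQ}), which is precisely the structure of your sketch. If anything, your account of where conditions (i) and (ii) enter is more detailed than what appears in the text.
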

These two conditions are satisfied when for example  $b(x)= c_{0} + c_{1}x + (|x| \wedge 1)^{\alpha}$ with  $c_{0}, c_{1} \in \mathbb{R}$ and $\alpha \in [0,1)$.
When in particular $\alpha=0$, $b(x)$ is an affine function and, under the post--change measure $\Pro_{0}$,  $\{\xi_{t}\}$ is a \textit{fractional} Ornstein--Uhlenbeck  process. This is the fractional analogue of the classical Ornstein--Uhlenbeck process and has been used in financial modeling (see for example \cite{CV}, \cite{CR}). For a study of its main properties we refer to  Cheridito et al. \cite{che}.  
%In this case, it can be shown using the results of  Kleptsyna and Le-Breton \cite{KLB1,KLB2} that the conditions of Theorem \ref{main} are always satisfied, which  implies that the CUSUM test is optimal for detecting a change from a fBm to fOU for \textit{any} Hurst index $H$.
%(\ref{sdeY10}) reduces to 
%\begin{equation} \label{fdiff} \xi_{t} = B^{H}_{t} +  \;  \theta  \, \int_{0}^{t} \xi_{s}  \, ds,
%\quad t \geq 0 ,
%\end{equation} 
%and condition (\ref{fullQ}) takes the following form:
%\begin{equation}
%\int_{0}^{\infty} \frac{d}{dm_{t}} \, \int_{0}^{t}  k_{H}(t,s) \, \xi_{s} \,  ds = \infty, \quad \Pro_{0}, \Pro_{\infty}-\text{a.s.}
%\end{equation}
%\begin{align*}  Q_{t} &= \frac{d}{dm_{t}} \, \int_{0}^{t}  k_{H}(t,s) \, \xi_{s} \,  ds.   \end{align*} More generally, t can be shown that the conditions of Theorem \ref{main} are satisfied when  which implies that the optimality of the CUSUM test is not limited to linear, Gaussian fractional diffusions. 

%-----------------------------------------------------------
%-----------------------------------------------------------
%-----------------------------------------------------------

\subsubsection{Polynomial drift}
When $\mu_{t}=t^{\alpha}$, where $\alpha$ is a real constant,  the post-change dynamics (\ref{sdeY0}) reduce to 
\begin{align*} %\label{fbmdrift}
\xi_{t} &=  B^{H}_{t} + \int_{0}^{t} s^{\alpha} ds = B^{H}_{t} + \frac{t^{\alpha+1}}{\alpha+1}
\end{align*}
and the process $Q_{t}$ takes the following form
\begin{align*} % \label{QHa}
Q_{t} %&= \frac{d}{d\langle \zeta \rangle_{t}} \, \int_{0}^{t} s^{\alpha}  \, k_{H}(t,s) \, ds  \nonumber \\
& = \frac{1}{c_{H}} \; \frac{\text{d}}{\text{d} \langle \zeta \rangle_{t}} \, \int_{0}^{t} \, s^{\frac{1}{2}-H+\alpha} \,  (t-s)^{\frac{1}{2}-H} \, ds. %d_{H,\alpha} \, t^{\alpha},
\end{align*}
After some algebraic manipulations %that include integrals of the Beta function,
%\begin{equation} \label{beta}B(x,y):= \int_{0}^{1} t^{x-1} \, (1-t)^{y-1} \, dt = \frac{\Gamma(x) \,  \Gamma(y)}{\Gamma(x+y)}, \quad x,y >0,\end{equation}
we  obtain  $Q_{t}= d_{H,\alpha} \, t^{\alpha}$, where
\begin{align*}  %\label{dHa} 
d_{H,\alpha} &:= \frac{\Gamma(3-2H) \; \Gamma(3/2-H+ \alpha)}{\Gamma(3-2H+\alpha) \; \Gamma(3/2-H)} \, \frac{2-2 H+\alpha}{2-2H},
\end{align*}
%where   $d_{H,\alpha}$ is a positive constant that depends both on H and $\alpha$,
%\begin{align}  d_{H,\alpha} &:= \frac{B(3/2-H + \alpha , 3/2-H)  \,\Gamma(3-2H)}{c_{H}^{2}} \, \frac{2-2 H+\alpha}{2-2H},   \label{dHa}  \end{align}
%where  $\alpha$  is some real constant. Since the kernel of the integral that of the Beta function, we obtain $Q_{t}= d_{H,\alpha} \, t^{\alpha}$ 
and consequently
\begin{equation} \label{qfbm}
\int_{0}^{t} Q_{s}^{2} \, d\langle \zeta \rangle_{s} =  v_{H,\alpha} \; t^{2-2H+2\alpha}, \quad v_{H,\alpha}:=  \frac{(d_{H,\alpha})^{2}}{\lambda_{H}}  \, \frac{1-H}{1-H+\alpha}.
\end{equation}

We can now state two interesting corollaries. 

\begin{corollary}   \label{coro1}
If $\mu_{t}=t^{\alpha}$ with $\alpha+1>H$, then the CUSUM test  is $\cJ_{M}$--optimal. 
\end{corollary}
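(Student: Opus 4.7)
The plan is to invoke Theorem \ref{main} directly, using the explicit computation carried out just before the statement. All three hypotheses of Theorem \ref{main} can be read off from the closed form given in (\ref{qfbm}), so the proof essentially reduces to checking that the exponent $2-2H+2\alpha$ is strictly positive, which is exactly the assumption $\alpha+1>H$.

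First I would verify the local integrability condition. Since $Q_s = d_{H,\alpha}\, s^\alpha$ is deterministic, formula (\ref{qfbm}) gives
$\int_0^t Q_s^2\, d\langle \zeta\rangle_s = v_{H,\alpha}\, t^{2-2H+2\alpha},$
which is finite for every $t<\infty$ whenever the constant $v_{H,\alpha}$ is finite. The hypothesis $\alpha+1>H$ guarantees $1-H+\alpha>0$, so $v_{H,\alpha}$ (which involves the factor $\tfrac{1-H}{1-H+\alpha}$) is finite and positive, and the first condition of Theorem \ref{main} holds $\Pro_\infty$-a.s.

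Next I would verify the Girsanov/Novikov-type condition on the stochastic exponential. Because $Q$ is deterministic and $\{\zeta_t\}$ is a continuous Gaussian martingale under $\Pro_\infty$ with deterministic quadratic variation $\langle \zeta\rangle_t = \lambda_H^{-1} t^{2-2H}$, the integral $\int_0^t Q_s\, d\zeta_s$ is a centered Gaussian random variable with variance $v_{H,\alpha}\, t^{2-2H+2\alpha}<\infty$. Novikov's criterion is then immediate, since
$\Exp_\infty\!\left[\exp\!\left(\tfrac{1}{2}\int_0^t Q_s^2\, d\langle \zeta\rangle_s\right)\right] = \exp\!\left(\tfrac{1}{2} v_{H,\alpha}\, t^{2-2H+2\alpha}\right) < \infty$
for every finite $t$. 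Hence the exponential in (\ref{des}) is a true $\Pro_\infty$-martingale of mean one.

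Finally I would verify the full-energy condition (\ref{fullQ}). Letting $t\to\infty$ in (\ref{qfbm}), the quantity $v_{H,\alpha}\, t^{2-2H+2\alpha}$ diverges if and only if $2-2H+2\alpha>0$, i.e.\ exactly when $\alpha+1>H$. Thus $\int_0^\infty Q_s^2\, d\langle \zeta\rangle_s=\infty$ deterministically, and in particular $\Pro_0, \Pro_\infty$-almost surely. With the three hypotheses of Theorem \ref{main} satisfied, the CUSUM test is $\cJ_M$-optimal, completing the proof. The only potentially delicate point is the derivation of (\ref{qfbm}) itself, which was already carried out in the paper via the Beta integral for $\int_0^t s^{1/2-H+\alpha}(t-s)^{1/2-H}\, ds$; under the assumption $\alpha+1>H$ the associated Gamma factors are all well defined, so no additional care is needed here.
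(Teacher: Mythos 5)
Your proof is correct and follows the same route as the paper: the paper's own argument simply observes that (\ref{qfbm}) gives $\int_0^t Q_s^2\, d\langle\zeta\rangle_s = v_{H,\alpha}\,t^{2-2H+2\alpha}$, so condition (\ref{fullQ}) holds precisely when $\alpha > H-1$, and then invokes Theorem \ref{main}. Your additional verification of the local integrability and Novikov conditions (which is immediate here since $Q$ is deterministic and $\zeta$ is a Gaussian martingale) is a welcome bit of thoroughness that the paper leaves implicit, but it does not change the argument.
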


\begin{proof}
%where  $v_{H,\alpha}$ is a positive constant that depends both on H and $\alpha$,
%\begin{align} 
%v_{H,\alpha} &:=  \frac{(d_{H,\alpha})^{2}}{\lambda_{H}}  \, \frac{1-H}{1-H+\alpha}. \label{vHa}
%\end{align} ^This discussion leads to some very interesting conclusions. First of all, it implies that the 
%CUSUM test solves Problem \ref{moust_crit} as long as $\alpha>H-1$, since it is clear 
From (\ref{qfbm}) it is clear that condition (\ref{fullQ}) is satisfied when  $\alpha>H-1$, thus the corollary follows directly from Theorem \ref{main}.
\end{proof}

\begin{corollary}  \label{coro2}
If $\mu_{t}=t^{\alpha}$ with $\alpha+1=H+1/2$, then  the CUSUM test is $\cJ_{L}$--optimal. 
\end{corollary}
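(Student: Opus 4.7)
The plan is to invoke Theorem \ref{MAIN} directly by showing that, under the stated parameter relation, the quadratic variation $\langle u \rangle_t$ is exactly linear in $t$. First I would observe that when $\mu_t = t^\alpha$, the process $Q_t$ defined in (\ref{Q}) reduces to the \emph{deterministic} function $Q_t = d_{H,\alpha}\, t^\alpha$ computed in the text. Since $Q$ is non-random, the Dol\'eans exponential in (\ref{des}) is automatically a true $\Pro_\infty$-martingale on every finite interval (a deterministic integrand against the martingale $\zeta$ produces a Gaussian stochastic integral with explicit Laplace transform), so both integrability hypotheses of Theorem \ref{main} are trivially verified from (\ref{qfbm}) whenever $2-2H+2\alpha>0$.

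Next I would specialize to the relation $\alpha+1 = H+1/2$, i.e.\ $\alpha = H-1/2$. Substituted into the exponent $2-2H+2\alpha$ appearing in (\ref{qfbm}), this gives $2-2H+2(H-1/2)=1$, so
\begin{equation*}
\langle u \rangle_t \;=\; \int_0^t Q_s^2 \, d\langle \zeta \rangle_s \;=\; v_{H,\alpha}\, t, \qquad t\geq 0,
\end{equation*}
i.e.\ $\langle u \rangle_t$ is proportional to $t$. As a by-product, condition (\ref{fullQ}) is satisfied (the right-hand side tends to infinity with $t$), hence by Theorem \ref{main} the representation (\ref{llrfbm2}) for $u_t$ is valid and condition (\ref{full}) of Theorem \ref{MAIN} holds.

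Finally I would apply the second assertion of Theorem \ref{MAIN}: since $\{u_t\}$ has quadratic variation proportional to $t$, $\mathcal{J}_M$-optimality of CUSUM upgrades to $\mathcal{J}_L$-optimality (the false alarm and delay criteria for CUSUM, up to the multiplicative constant $v_{H,\alpha}/2$, coincide in time and in quadratic-variation units). The only place requiring a small check is the deterministic computation leading from $\mu_s = s^{H-1/2}$ to the exponent $1$ in (\ref{qfbm}); there is no genuine obstacle, since everything is explicit in terms of Beta/Gamma integrals and the rest of the argument is a direct invocation of Theorem \ref{MAIN}.
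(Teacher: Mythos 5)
Your proposal is correct and follows essentially the same route as the paper: substitute $\alpha=H-1/2$ into (\ref{qfbm}) to see that $\langle u\rangle_{t}$ is proportional to $t$, note that condition (\ref{fullQ}) holds (the paper cites Corollary \ref{coro1} for this, which is the same check), and invoke the second assertion of Theorem \ref{MAIN}. The extra detail you supply on verifying the hypotheses of Theorem \ref{main} for a deterministic integrand is a harmless elaboration, not a different argument.
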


\begin{proof}
From (\ref{qfbm}) it is clear that when $\alpha=H-1/2$,  $\langle u \rangle_{t}$ is proportional to $t$, thus the result follows from the previous corollary and Theorem \ref{MAIN}.
\end{proof}

One of the implications of Corollary \ref{coro1} is that the CUSUM test is always $\cJ_{M}$--optimal when a  \textit{linear} drift ($\alpha=0$) emerges in a fractional Brownian motion, no matter what the value of the Hurst index, $H$, is. 

Corollary \ref{coro2}  provides a class of processes for which the CUSUM test optimizes  Lorden's \textit{original} criterion. To our knowledge, such an optimality property has been established only in the case of \textit{diffusion--type} process for which the quadratic variation $\langle u \rangle _{t}$ is proportional to $t$ (see the discussion in pg. 313 of \cite{moustito}). Of course, the case of a linear drift emerging in a standard Brownian motion corresponds to the special case $H=1/2$ of Corollary \ref{coro2}. 

\subsection{Extensions}
It is possible to generalize Theorem \ref{main} to the case that the  pre--change measure $\Pro_{\infty}$ is induced by the following dynamics 
\begin{equation} \label{exte}
\xi_{t}= \int_{0}^{t} \sigma(s) \, d\tilde{B}^{H}_{s}, \quad t \geq 0,
\end{equation}
where $\tilde{B}^{H}$ is a $\Pro_{\infty}$--fBm and $\sigma: \mathbb{R}_{+} \rightarrow \mathbb{R}_{+}$ is a deterministic, non-vanishing real function  with  $\delta$--H\"older continuous paths for some  $\delta >1-H$. This smoothness condition guarantees that the integral in (\ref{exte}) can be defined in a
Riemann--Stieljes (or Young) sense (see \cite{young}). Then, the proof of Theorem \ref{main} goes through as long as we
modify the definitions of $\zeta$ and $Q$ in (\ref{ZZZ}) and (\ref{Q}) respectively as follows %%%%%%In this case, the transformed process $\zeta$ is defined as 
\begin{equation*} 
\zeta_{t}= \int_{0}^{t} k_{H}(t,s) \frac{1}{\sigma(s)} d\xi_{s}, \quad Q_{t}=\int_{0}^{t} k_{H}(t,s) \frac{\mu_{s}}{\sigma(s)} ds, \quad t \geq 0.
\end{equation*}
Note that when $\sigma$ is a stochastic process, the integral in (\ref{exte}) cannot be defined as a stochastic integral in an It\^{o} sense, 
since fBm is \textit{not} a semimartingale. %%%%%%For a definition that relies on the notion of Skorokhod integral we refer to \cite{DU}.

\section{Conclusions}
In this work, we extended the optimality properties of the  CUSUM test in two directions. 

First, with respect to a modified version of Lorden's criterion, where detection delay is measured in terms of the quadratic variation (that is accumulated between the time of the change and the time of stopping) of the log-likelihood ratio between the post and pre--change distribution. With respect to this criterion, the optimality of CUSUM was established for
arbitrary continuous-path processes, generalizing existing optimality results that refer to processes of diffusion--type. As an application, we
obtained sufficient conditions for the optimality of the CUSUM procedure when the observed process is a fractional Brownian motion 
that adopts a random drift. We saw that these conditions are satisfied in some interesting special cases, such as when a fractional Brownian motion 
turns into a fractional Ornstein--Uhlenbeck process or it adopts a linear drift, for any value of the Hurst  index.

Second, with respect to Lorden's original criterion, we proved that the CUSUM test is optimal when a fractional Brownian motion with Hurst index $H$ adopts a deterministic, polynomial drift term with exponent $H+1/2$. In this way, we generalized the well--known optimality of CUSUM in the case that a linear drift emerges in a standard ($H=1/2$) Brownian motion.

%Overall, this work provides a general framework  in which the optimal detection rule admits a simple structure and 
%the optimal performance a closed--form expression. The latter can serve as the ultimate benchmark in the case that 
%when one or more assumptions of the present framework are violated, such as when 
%the observed process is only sampled  at discrete times. % or when the pre and post--change distributions are only partially specified. 

%-----------------------------------------------------------
%-----------------------------------------------------------
%-----------------------------------------------------------

\end{document}